\numberwithin{equation}{section}
\theoremstyle{definition}
\numberwithin{equation}{section}
\newcommand{\ncom}{\newcommand}
\ncom{\beq}{\begin{equation}}
\ncom{\eeq}{\end{equation}}
\ncom{\bea}{\begin{eqnarray*}}
\ncom{\eea}{\end{eqnarray*}}
\ncom{\beqa}{\begin{eqnarray}}
\ncom{\eeqa}{\end{eqnarray}}
\ncom{\nno}{\nonumber}
\ncom{\non}{\nonumber}
\ncom{\ds}{\displaystyle}
\ncom{\half}{\frac{1}{2}}
\ncom{\mbx}{\makebox{.25cm}}
\ncom{\hs}{\mbox{\hspace{.25cm}}}
\ncom{\rar}{\rightarrow}
\ncom{\Rar}{\Rightarrow}
\ncom{\noin}{\noindent}
\ncom{\bc}{\begin{center}}
\ncom{\ec}{\end{center}}
\ncom{\sz}{\scriptsize}
\ncom{\rf}{\ref}
\ncom{\s}{\sqrt{2}}
\ncom{\sgm}{\sigma}
\ncom{\Sgm}{\Sigma}
\ncom{\psgm}{\sigma^{\prime}}
\ncom{\dt}{\delta}
\ncom{\Dt}{\Delta}
\ncom{\lmd}{\lambda}
\ncom{\Lmd}{\Lambda}
\ncom{\Th}{\Theta}
\ncom{\e}{\eta}
\ncom{\eps}{\epsilon}
\ncom{\pcc}{\stackrel{P}{>}}
\ncom{\lp}{\stackrel{L_{p}}{>}}
\ncom{\dist}{{\rm\,dist}}
\ncom{\sspan}{{\rm\,span}}
\ncom{\re}{{\rm Re\,}}
\ncom{\im}{{\rm Im\,}}
\ncom{\sgn}{{\rm sgn\,}}
\ncom{\ba}{\begin{array}}
\ncom{\ea}{\end{array}}
\ncom{\hone}{\mbox{\hspace{1em}}}
\ncom{\htwo}{\mbox{\hspace{2em}}}
\ncom{\hthree}{\mbox{\hspace{3em}}}
\ncom{\hfour}{\mbox{\hspace{4em}}}
\ncom{\vone}{\vskip 2ex}
\ncom{\vtwo}{\vskip 4ex}
\ncom{\vonee}{\vskip 1.5ex}
\ncom{\vthree}{\vskip 6ex}
\ncom{\vfour}{\vspace*{8ex}}
\ncom{\norm}{\|\;\;\|}
\ncom{\integ}[4]{\int_{#1}^{#2}\,{#3}\,d{#4}}
\ncom{\vspan}[1]{{{\rm\,span}\{ #1 \}}}
\ncom{\dm}[1]{ {\displaystyle{#1} } }
\ncom{\ri}[1]{{#1} \index{#1}}
\newtheorem{theorem}{\bf Theorem}[section]
\newtheorem{lemma}{Lemma}[section]
\newtheorem{corollary}{Corollary}[section]
\newtheoremstyle
    {remarkstyle}
    {}
    {11pt}
    {}
    {}
    {\bfseries}
    {:}
    {     }
    {\thmname{#1} \thmnumber{#2} }
\theoremstyle{remarkstyle}
\def\eps{\varepsilon}
\begin{document}
\title{Correlation between Adomian and partial exponential Bell polynomials}
\author[Kuldeep Kumar Kataria]{K. K. Kataria}
\address{Kuldeep Kumar Kataria, Department of Mathematics,
 Indian Institute of Technology Bombay, Powai, Mumbai 400076, INDIA.}
 \email{kulkat@math.iitb.ac.in}
\author{P. Vellaisamy}
\address{P. Vellaisamy, Department of Mathematics,
 Indian Institute of Technology Bombay, Powai, Mumbai 400076, INDIA.}
 \email{pv@math.iitb.ac.in}
\thanks{The research of K. K. Kataria was supported by a UGC fellowship no. F.2-2/98(SA-I), Govt. of India.}
\subjclass[2010]{Primary : 05A19; Secondary: 49M27}
\keywords{Adomian decomposition method; Adomian polynomials; partial exponential Bell polynomials; complete Bell polynomials.}
\begin{abstract}
We obtain some recurrence relationships among the partition vectors of the partial exponential Bell polynomials. On using such results, the $n$-th Adomian polynomial for any nonlinear operator can be expressed explicitly in terms of the partial exponential Bell polynomials. Some new identities for the partial exponential Bell polynomials are obtained by solving certain ordinary differential equations using Adomian decomposition method.
\end{abstract}

\maketitle
\section{Introduction}
The Bell polynomials studied by {\small{\sc Bell}} (1927), (1934) are special polynomials in combinatorial analysis with numerous applications in different areas of mathematics. The incomplete or partial exponential Bell polynomials $B_{n,k}$ (see p. 96, {\small{\sc Hazewinkel}} (1997), {\small{\sc Cvijovi\'c}} (2011)) in $n-k+1$ variables  are triangular array of polynomials defined by
\begin{equation}\label{1.10}
B_{n,k}(u_1,u_2,\ldots,u_{n-k+1})=n!\underset{\Lambda^k_n}{\sum}\prod_{j=1}^{n-k+1}\frac{1}{k_j!}\left(\frac{u_j}{j!}\right)^{k_j},
\end{equation}
where the partition set is given by
\begin{equation*}\label{1.11}
\Lambda^k_n=\left\{\left(k_1,k_2,\ldots,k_{n-k+1}\right):\sum_{j=1}^{n-k+1}k_j=k, \sum_{j=1}^{n-k+1}jk_j=n, k_j\in\mathbb{N}_0\right\}.
\end{equation*}
Here $\mathbb{N}_m=\left\{x:x\geq m,x\in\mathbb{N}\cup\{0\}\right\}$ and $\mathbb{N}$ denotes the set of positive integers. Also, the sum
\begin{equation}
B_{n}(u_1,u_2,\ldots,u_{n})=\sum_{k=1}^{n‎}B_{n,k}(u_1,u_2,\ldots,u_{n-k+1}),
\end{equation}
is called the $n$-th complete exponential Bell polynomial. The partial ordinary Bell polynomials in $n-k+1$ variables are defined by
\begin{equation}\label{nn1.10}
\hat{B}_{n,k}(u_1,u_2,\ldots,u_{n-k+1})=k!\underset{\Lambda^k_n}{\sum}\prod_{j=1}^{n-k+1}\frac{u_j^{k_j}}{k_j!}.
\end{equation}
For more details, results and some known identities on Bell polynomials, we refer to pp. 133-137, {\small{\sc Comtet}} (1974), pp. 95-98, {\small{\sc Hazewinkel}} (1997) and {\small{\sc Cvijovi\'c}} (2011).

Next we briefly explain Adomian decomposition method (ADM) (see {\small{\sc Adomian}} (1986), (1994)), which will be used latter to obtain some new identities for Bell polynomials.

In ADM, solution of the functional equation
\begin{equation}\label{1.1}
u=f+L(u)+N(u),
\end{equation}
where $L$ and $N$ are linear and nonlinear operators respectively and $f$ is a known function, is expressed in the form of an infinite series
\begin{equation}\label{1.2}
u= ‎‎\sum_{n=0}^{\infty‎}‎u_n.
\end{equation}
The nonlinear term $N(u)$ decomposes as 
\begin{equation}\label{1.3}
N(u)=‎‎\sum_{n=0}^{\infty‎}A_n(u_0,u_1,\ldots,u_n),
\end{equation}
where $A_n$ denotes the $n$-th Adomian polynomial in $u_0,u_1,\ldots,u_n$. Also, the series (\ref{1.2}) and (\ref{1.3}) are assumed to be absolutely convergent. So, (\ref{1.1}) can be rewritten as
\begin{equation*}\label{1.4}
‎‎\sum_{n=0}^{\infty‎}‎u_n=f+‎‎\sum_{n=0}^{\infty‎}L(u_n)+‎‎\sum_{n=0}^{\infty‎}A_n.
\end{equation*}
Thus $u_n$'s are obtained by the following recursive relation
\begin{equation*}\label{1.5}
u_0=f\ \ \ \ \mathrm{and}\ \ \ \ u_n=L(u_{n-1})+A_{n-1}.
\end{equation*}
The crucial step involved in ADM is the calculation of Adomian polynomials. {\small{\sc Adomian}} (1986) gave a method for determining these polynomials, by parametrizing $u$ as
\begin{equation*}\label{1.4a}
u_\lambda=‎‎\sum_{n=0}^{\infty‎}u_n\lambda^n 
\end{equation*}
and assuming $N(u_\lambda)$ to be analytic in $\lambda$, which decomposes as
\begin{equation*}\label{1.5a}
N(u_\lambda)=‎‎\sum_{n=0}^{\infty‎}A_n(u_0,u_1,\ldots,u_n)\lambda^n.
\end{equation*}
Hence, Adomian polynomials are given by
\begin{equation}\label{1.6}
A_n(u_0,u_1,\ldots,u_n)=\left.\frac{1}{n!}\frac{\partial^n N(u_\lambda)}{\partial \lambda^n} \right|_{\lambda=0},\ \forall\ n\in\mathbb{N}_0.
\end{equation}

An improved version of the above result (see {\small{\sc Zhu}} {\it et al.} (2005)) is given by
\begin{equation}\label{1.678}
A_n(u_0,u_1,\ldots,u_n)=\left.\frac{1}{n!}\frac{\partial^n N(‎‎\sum_{k=0}^{n‎}‎u_k\lambda^k)}{\partial \lambda^n} \right|_{\lambda=0},\ \forall\ n\in\mathbb{N}_0.
\end{equation}

{\small{\sc Rach}} (1984) suggested the following formula for these polynomials:
\begin{equation*}\label{1.7a}
A_0(u_0)=‎‎N(u_0),
\end{equation*}
and
\begin{equation}\label{1.7}
A_n(u_0,u_1,\ldots,u_n)=\sum_{k=1}^{n‎}C(k,n)N^{(k)}(u_0),\ \forall\ n\in\mathbb{N},
\end{equation}
where 
\begin{equation}\label{1.8}
C(k,n)=\underset{\Theta^k_n}{\sum}\prod_{j=1}^{n}\frac{u_j^{k_j}}{k_j!},
\end{equation}
and the summation is taken over the partition set
\begin{equation*}\label{1.9}
\Theta^k_n=\left\{\left(k_1,k_2,\ldots,k_n\right):\sum_{j=1}^nk_j=k, \sum_{j=1}^{n}jk_j=n, k_j\in\mathbb{N}_0\right\}.
\end{equation*}
Also, $N^{(k)}(.)$ denotes the $k$-th derivative of the nonlinear term. One can easily show the equivalence of (\ref{1.6}) and (\ref{1.7}) using the Fa\`{a} di Bruno's formula. Recently, {\small{\sc Kataria}} and {\small{\sc Vellaisamy}} (2016) obtained simple parametrization methods for generating these Adomian polynomials both explicitly and recursively.

We obtain some results related to the partition vectors of the partial exponential Bell polynomials. An important observation is that the $C(k,n)$'s are homogeneous polynomials of order $k$ which can be represented in terms of the well known Bell polynomials. Hence, a closed form expression of the $n$-th order Adomian polynomial for any nonlinear operator is obtained as a finite sum of the partial exponential Bell polynomials. The significance is that any algorithm or an identity for $C(k,n)$'s will give the corresponding results for the Bell polynomials. Indeed we use the results of {\small{\sc Duan}} (2010), (2011) to obtain some recursive algorithms for the partial exponential Bell polynomials. Also, we use Adomian decomposition method to solve certain ordinary differential equations to obtain some new identities for the partial exponential Bell polynomials. The corresponding results for the partial ordinary Bell polynomials are mentioned as corollaries.

\section{The partition vectors of Bell polynomials}
We use the following results by {\small{\sc Duan}} (2010) for the partition set $\Theta^k_n$ to show some similar results for the partition set $\Lambda^k_n$.
\begin{lemma}\label{l1a}
	For $1\leq k\leq n$, $\Theta^k_n\subset\mathbb{N}_0^n$ and $\Theta^1_1=\{(1)\}$, $\Theta^1_n=\{(0,0,\ldots,0,1)\}$.
\end{lemma}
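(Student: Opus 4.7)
The plan is to verify each of the three assertions directly from the defining constraints of $\Theta^k_n$, since each follows by an elementary unpacking of the definition rather than requiring any nontrivial combinatorics.

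First, for the inclusion $\Theta^k_n\subset\mathbb{N}_0^n$, I would simply observe that the defining condition already stipulates $(k_1,\ldots,k_n)$ to be an $n$-tuple with entries in $\mathbb{N}_0$, so the inclusion is immediate. If a cleaner justification is desired, one can also note that $\sum_{j=1}^n jk_j=n$ together with $k_j\in\mathbb{N}_0$ forces each $k_j\le n/j\le n$, which automatically gives a finite nonnegative integer tuple in $\mathbb{N}_0^n$.

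Next, for $\Theta^1_1$, I would plug $k=1$ and $n=1$ into both summation constraints. The tuple has a single entry $k_1$, and both $\sum k_j=1$ and $\sum jk_j=1$ reduce to $k_1=1$. Hence the only admissible tuple is $(1)$, proving $\Theta^1_1=\{(1)\}$.

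Finally, for $\Theta^1_n$ with $k=1$, I would reason in two steps. The constraint $\sum_{j=1}^n k_j=1$ together with $k_j\in\mathbb{N}_0$ forces exactly one of the $k_j$'s to equal $1$ and the rest to be $0$; call the special index $j_0$. Then the second constraint $\sum_{j=1}^n jk_j=n$ collapses to $j_0=n$. Therefore the unique element is $(0,0,\ldots,0,1)$ with the $1$ in the last coordinate, as claimed. The only mildly delicate point is making the case analysis on which index carries the $1$ sufficiently clean, but since the weighted sum pins it down uniquely there is no real obstacle here.
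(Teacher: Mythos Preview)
Your proposal is correct; each assertion follows immediately from the defining constraints of $\Theta^k_n$, exactly as you argue. The paper itself does not prove this lemma at all but merely quotes it as a result of \textsc{Duan} (2010), so your direct verification is entirely adequate and in fact supplies more detail than the paper does.
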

\begin{lemma}\label{l2}
	For every vector $\left(k_1,k_2,\ldots,k_{n}\right)\in\Theta^k_n$, $2\leq k\leq n$, the last $(k-1)$ entries are zero \textit{i.e.} $k_{n-k+2}=k_{n-k+3}=\ldots=k_n=0$.
\end{lemma}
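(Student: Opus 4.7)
The plan is to prove the statement by contradiction, exploiting the two constraints $\sum_j k_j = k$ and $\sum_j j k_j = n$ that define $\Theta^k_n$. Specifically, I would suppose that there exists an index $m$ with $n-k+2 \le m \le n$ such that $k_m \ge 1$, and then derive an inequality that is incompatible with these constraints.

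First I would split the weighted sum: write
\[
n \;=\; \sum_{j=1}^{n} j\, k_j \;=\; m\, k_m \;+\; \sum_{\substack{j=1\\ j\ne m}}^{n} j\, k_j.
\]
Since every $j \ge 1$, I have $\sum_{j\ne m} j\, k_j \ge \sum_{j\ne m} k_j = k - k_m$, the last equality coming from the constraint $\sum_j k_j = k$. Combining these two gives
\[
n \;\ge\; m\, k_m + (k - k_m) \;=\; (m-1)\, k_m + k.
\]
Under the assumption $k_m \ge 1$, this simplifies to $n \ge (m-1) + k$, i.e.\ $m \le n-k+1$, which contradicts the assumed range $m \ge n-k+2$.

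The key step is the lower bound $\sum_{j\ne m} j k_j \ge k - k_m$, which uses both the nonnegativity of the $k_j$'s and the cardinality constraint; the weight constraint then immediately forces the index $m$ to lie no farther than $n-k+1$. I do not anticipate any real obstacle here: the argument is essentially a one-line pigeonhole/weight count, entirely analogous to the kind of reasoning that produces the bound $n-k+1$ in the definition (\ref{1.10}) of $B_{n,k}$. The only mild subtlety is that the argument requires $k\ge 2$ in the sense that for $k=1$ the conclusion is vacuous (no indices lie in the range $n-k+2 \le m \le n$ when $k=1$ gives $n+1 \le m \le n$), which is consistent with the hypothesis $2 \le k \le n$ and with Lemma~\ref{l1a}, where $\Theta^1_n$ is described separately.
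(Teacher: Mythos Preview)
Your argument is correct. The paper itself does not supply a proof of this lemma; it is quoted as one of the results of {\small{\sc Duan}} (2010) on the partition set $\Theta^k_n$, so there is no in-paper proof to compare against. Your contradiction argument, bounding $\sum_{j\ne m} j k_j$ below by $\sum_{j\ne m} k_j = k - k_m$ and then forcing $m \le n-k+1$, is the standard weight-count and is entirely sound.
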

\begin{theorem}\label{t1a}
	For $n\in\mathbb{N}_2$, if $2\leq k\leq\left[\frac{n}{2}\right]$, then $\Theta^k_n=\Theta_1\cup\Theta_2$ and if $\left[\frac{n}{2}\right]<k\leq n$, then $\Theta^k_n=\Theta_1$, where
	\begin{equation*}
	\Theta_1=\{\left(k_1+1,k_2,\ldots,k_{n-1},0\right):\left(k_1,k_2,\ldots,k_{n-1}\right)\in\Theta^{k-1}_{n-1}\}
	\end{equation*}
	and
	\begin{equation*}
	\Theta_2=\{(0,k_1,k_2,\ldots,k_{n-k},\underset{k-1\ \mathrm{times}}{\underbrace{0,0,\ldots,0}}):\left(k_1,k_2,\ldots,k_{n-k}\right)\in\Theta^{k}_{n-k}\}.
	\end{equation*}
\end{theorem}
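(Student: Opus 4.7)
The plan is to prove the two set-equalities by double inclusion, with the essential work being a case split on the value of the first coordinate $k_1$ of a vector in $\Theta^k_n$.

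First, I would verify the inclusion $\Theta_1 \cup \Theta_2 \subseteq \Theta^k_n$ by direct arithmetic. For $(k_1,\ldots,k_{n-1}) \in \Theta^{k-1}_{n-1}$, incrementing $k_1$ by one and appending a zero changes the coordinate sum from $k-1$ to $k$ and the weighted sum from $n-1$ to $n$, so $\Theta_1 \subseteq \Theta^k_n$. For $(k_1,\ldots,k_{n-k}) \in \Theta^k_{n-k}$, prepending a zero and padding with $k-1$ trailing zeros shifts each original index $j$ up by one, so the coordinate sum remains $k$ and the weighted sum becomes $\sum_{j}(j+1)k_j = (n-k)+k = n$, placing the resulting vector in $\Theta^k_n$. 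Also, $\Theta_1 \cap \Theta_2 = \emptyset$, since vectors in $\Theta_1$ have first entry at least $1$ while those in $\Theta_2$ have first entry $0$.

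For the reverse inclusion, fix $(k_1,\ldots,k_n) \in \Theta^k_n$. By Lemma \ref{l2}, the last $k-1$ coordinates vanish, so in particular $k_n = 0$ (using $k\geq 2$). I would split into two cases based on $k_1$. If $k_1 \geq 1$, consider $(k_1-1, k_2,\ldots,k_{n-1})$: its coordinates sum to $k-1$ and its weighted sum is $n-1$, so it lies in $\Theta^{k-1}_{n-1}$ and therefore the original vector is in $\Theta_1$. If $k_1 = 0$, drop the leading zero and the $k-1$ trailing zeros to obtain the length-$(n-k)$ vector $(k_2,\ldots,k_{n-k+1})$; reindexing $j \mapsto j-1$ shows its coordinate sum is $k$ and its weighted sum is $\sum_{j=2}^{n-k+1}(j-1)k_j = n-k$, so it lies in $\Theta^k_{n-k}$ and the original vector is in $\Theta_2$.

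Finally, the dichotomy on $k$ comes from the simple observation that $k_1 = 0$ forces $\sum_{j\geq 2} j k_j = n$ with $\sum_{j\geq 2} k_j = k$, which in turn requires $n \geq 2k$, i.e., $k \leq [n/2]$. Thus for $k > [n/2]$ every vector in $\Theta^k_n$ has $k_1 \geq 1$, giving $\Theta^k_n = \Theta_1$, while for $2\leq k\leq [n/2]$ both cases can occur and $\Theta^k_n = \Theta_1 \cup \Theta_2$. The only subtle step is the reindexing in the $k_1 = 0$ case and confirming it forces the bound $k \leq [n/2]$; the rest is straightforward bookkeeping with the two linear constraints defining $\Theta^k_n$.
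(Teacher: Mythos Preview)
Your argument is correct. Note, however, that the paper does not supply its own proof of this theorem: it is quoted (together with Lemmas~\ref{l1a} and~\ref{l2}) from {\sc Duan} (2010) as a known input, and is then used to derive the analogous statement Theorem~\ref{t2a} for the sets $\Lambda^k_n$. Your double-inclusion proof, splitting on whether $k_1\ge 1$ or $k_1=0$ and invoking Lemma~\ref{l2} to justify dropping the trailing zeros, is exactly the natural direct argument and is self-contained; the observation that $k_1=0$ forces $n\ge 2k$ cleanly explains the threshold $k\le\lfloor n/2\rfloor$. The disjointness $\Theta_1\cap\Theta_2=\emptyset$ you note is a bonus not asserted in the statement but worth recording.
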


Next we obtain some results for the partition set $\Lambda^k_n$ of Bell polynomials. The following lemmas are easy to prove.
\begin{lemma}\label{l1}
	For $1\leq k\leq n$, $\Lambda^k_n\subset\mathbb{N}_0^{n-k+1}$ with $\Lambda^1_n=\Theta^1_n$ for all $n\in\mathbb{N}$.
\end{lemma}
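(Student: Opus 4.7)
The plan is to unwind definitions; there is essentially no obstacle, but one should be careful to match the length conventions of the two partition sets.

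For the inclusion $\Lambda^k_n\subset\mathbb{N}_0^{n-k+1}$, I would simply observe that the defining set $\Lambda^k_n$ in \eqref{1.10} consists of $(n-k+1)$-tuples $(k_1,k_2,\ldots,k_{n-k+1})$ with each $k_j\in\mathbb{N}_0$, so membership in $\mathbb{N}_0^{n-k+1}$ is built into the definition. No combinatorial argument is needed.

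For the equality $\Lambda^1_n=\Theta^1_n$, I would substitute $k=1$ into the two definitions. A vector in $\Lambda^1_n$ has length $n-1+1=n$, the same as a vector in $\Theta^1_n$, so the ambient spaces agree. The constraint $\sum_{j=1}^{n}k_j=1$ forces exactly one coordinate to equal $1$ and all the others to equal $0$; the second constraint $\sum_{j=1}^{n}jk_j=n$ then forces that distinguished coordinate to sit in position $j=n$. Hence, for $n\geq 2$,
\[
\Lambda^1_n=\{(0,0,\ldots,0,1)\}=\Theta^1_n,
\]
where the last identification uses Lemma \ref{l1a}. The case $n=1$ is immediate since both sides reduce to $\{(1)\}$, again matching Lemma \ref{l1a}.

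The main (minor) thing to watch is the length convention: vectors in $\Lambda^k_n$ have length $n-k+1$, which is why substituting $k=1$ is exactly what makes the two partition sets live in the same ambient space $\mathbb{N}_0^n$. Beyond this bookkeeping, the argument is a direct verification.
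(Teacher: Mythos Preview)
Your proposal is correct and matches the paper's treatment: the paper simply states that the lemma is easy to prove and gives no argument, so your direct verification from the definitions (together with the appeal to Lemma~\ref{l1a}) is exactly what is intended.
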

The following lemma is evident from Lemma \ref{l2}.
\begin{lemma}\label{l3}
	Let $e^n_j$ denotes the $n$-tuple vector with unity at the $j$-th place and zero elsewhere. Then
	\begin{equation*}\label{2.0}
	\Theta^k_n=\left\{\sum_{j=1}^{n-k+1}k_je^n_j:\left(k_1,k_2,\ldots,k_{n-k+1}\right)\in\Lambda^k_n\right\}.
	\end{equation*}
\end{lemma}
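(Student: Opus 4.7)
The plan is to establish the set equality by proving both inclusions, treating the map $\Phi\colon (k_1,\ldots,k_{n-k+1})\mapsto\sum_{j=1}^{n-k+1}k_j e^n_j$ as the natural zero-padding embedding of an $(n-k+1)$-tuple into an $n$-tuple.

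For the inclusion $\Phi(\Lambda^k_n)\subseteq \Theta^k_n$, take any $(k_1,\ldots,k_{n-k+1})\in\Lambda^k_n$ and let $v=\Phi(k_1,\ldots,k_{n-k+1})$, so that the $j$-th coordinate $v_j$ equals $k_j$ for $1\le j\le n-k+1$ and equals $0$ for $n-k+2\le j\le n$. The defining constraints of $\Theta^k_n$ are preserved verbatim, since $\sum_{j=1}^n v_j=\sum_{j=1}^{n-k+1}k_j=k$ and $\sum_{j=1}^n j v_j=\sum_{j=1}^{n-k+1} jk_j=n$, with all entries in $\mathbb{N}_0$. Hence $v\in\Theta^k_n$.

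For the reverse inclusion $\Theta^k_n\subseteq\Phi(\Lambda^k_n)$, pick $(k_1,\ldots,k_n)\in\Theta^k_n$. If $k=1$, Lemma \ref{l1} gives $\Lambda^1_n=\Theta^1_n$, and the vector is its own image under $\Phi$. If $2\le k\le n$, Lemma \ref{l2} forces $k_{n-k+2}=k_{n-k+3}=\cdots=k_n=0$, so that $(k_1,\ldots,k_n)=\sum_{j=1}^{n-k+1}k_j e^n_j=\Phi(k_1,\ldots,k_{n-k+1})$. The truncated tuple $(k_1,\ldots,k_{n-k+1})$ then lies in $\Lambda^k_n$, because the two sum conditions $\sum k_j=k$ and $\sum jk_j=n$ only receive zero contributions from the discarded trailing entries.

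The main work is really contained in Lemma \ref{l2} of Duan, which guarantees that the support of every vector in $\Theta^k_n$ lives in the first $n-k+1$ coordinates; once that structural fact is in hand, the present lemma becomes a bookkeeping identification of $\Theta^k_n$ with the image of the zero-padding map applied to $\Lambda^k_n$, and no further obstacle remains.
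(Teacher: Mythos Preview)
Your argument is correct and matches the paper's approach: the paper simply declares the lemma ``evident from Lemma~\ref{l2},'' and you have spelled out precisely that derivation via the zero-padding map. The only minor remark is that the case $k=1$ does not actually require a separate appeal to Lemma~\ref{l1}, since then $n-k+1=n$ and $\Phi$ is the identity, but this does no harm.
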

\begin{lemma}\label{l4}
	The summation involved in (\ref{1.10}) and (\ref{1.8}) is invariant under $\Lambda^k_n$ and $\Theta^k_n$.
\end{lemma}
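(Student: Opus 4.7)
The plan is to leverage Lemmas \ref{l2} and \ref{l3} to exhibit an explicit bijection between $\Lambda^k_n$ and $\Theta^k_n$ that preserves the summands in (\ref{1.10}) and (\ref{1.8}). Concretely, define the map $\varphi \colon \Lambda^k_n \to \Theta^k_n$ by $\varphi(k_1,\ldots,k_{n-k+1}) = \sum_{j=1}^{n-k+1} k_j e^n_j$. Surjectivity of $\varphi$ is the content of Lemma \ref{l3}, and injectivity is immediate since reading off the first $n-k+1$ coordinates of $\varphi(k_1,\ldots,k_{n-k+1})$ recovers the original tuple. Lemma \ref{l2} plays a supporting role: it guarantees that every element of $\Theta^k_n$ has its trailing $k-1$ coordinates equal to zero, so that $\varphi$ is indeed well-defined onto $\Theta^k_n$ (and no elements of $\Theta^k_n$ are missed).

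Next I would verify that the summands match term by term under $\varphi$. For the Bell summand in (\ref{1.10}), enlarging the product from $j=1,\ldots,n-k+1$ to $j=1,\ldots,n$ (so as to index over $\Theta^k_n$) introduces only trivial factors $\tfrac{1}{0!}(u_j/j!)^{0}=1$ for $n-k+2\le j\le n$, because Lemma \ref{l2} forces $k_j=0$ there. Similarly, for the Rach summand in (\ref{1.8}), truncating the product from $j=1,\ldots,n$ down to $j=1,\ldots,n-k+1$ (so as to index over $\Lambda^k_n$) discards only trivial factors $\tfrac{u_j^{0}}{0!}=1$. Thus the two sums agree term by term under the identification $\varphi$, which yields the claimed invariance.

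The argument has essentially no obstacle beyond bookkeeping: the real content is already packaged in Lemmas \ref{l2} and \ref{l3}, and what remains is to observe that the ambient dimension of the partition tuples is cosmetic because the extra coordinates contribute multiplicative identities. I would organize the write-up as two short paragraphs—one verifying the bijection $\varphi$, one verifying the summand invariance—and state explicitly in each direction the convention for extending or restricting the index range of the product.
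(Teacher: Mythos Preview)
Your proposal is correct and is precisely the argument the paper has in mind: the paper states Lemma~\ref{l4} without proof, having just recorded Lemmas~\ref{l2} and~\ref{l3} as the needed ingredients. Your bijection $\varphi$ together with the observation that the trailing zero exponents contribute only trivial factors $1$ is exactly the intended (but unwritten) justification.
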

Similar recurrence relationships among the partition vectors of the partial exponential Bell polynomials hold.
\begin{theorem}\label{t2a}
	For $n\in\mathbb{N}_2$, if $2\leq k\leq\left[\frac{n}{2}\right]$, then $\Lambda^k_n=\Lambda_1\cup\Lambda_2$ and if $\left[\frac{n}{2}\right]<k\leq n$, then $\Lambda^k_n=\Lambda_1$, where
	\begin{equation*}
	\Lambda_1=\{\left(k_1+1,k_2,\ldots,k_{n-k+1}\right):\left(k_1,k_2,\ldots,k_{n-k+1}\right)\in\Lambda^{k-1}_{n-1}\}
	\end{equation*}
	and
	\begin{equation*}
	\Lambda_2=\{(0,k_1,k_2,\ldots,k_{n-2k+1},\underset{k-1\ \mathrm{times}}{\underbrace{0,0,\ldots,0}}):\left(k_1,k_2,\ldots,k_{n-2k+1}\right)\in\Lambda^{k}_{n-k}\}.
	\end{equation*}
\end{theorem}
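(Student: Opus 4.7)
The plan is to prove both inclusions directly by splitting an arbitrary partition vector in $\Lambda^k_n$ according to whether its first coordinate is zero or positive, mirroring the proof of Theorem~\ref{t1a} for $\Theta^k_n$.

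For the easy inclusion $\Lambda_1\cup\Lambda_2\subseteq\Lambda^k_n$, I would simply verify the defining relations of $\Lambda^k_n$. If $(k_1,\ldots,k_{n-k+1})\in\Lambda^{k-1}_{n-1}$, then incrementing $k_1$ by $1$ raises the unweighted sum from $k-1$ to $k$ and the weighted sum from $n-1$ to $n$, so $\Lambda_1\subseteq\Lambda^k_n$. If $(k_1,\ldots,k_{n-2k+1})\in\Lambda^{k}_{n-k}$, then the tuple $(0,k_1,\ldots,k_{n-2k+1},0,\ldots,0)$ of length $n-k+1$ has unweighted sum $k$ and weighted sum $\sum_{j=1}^{n-2k+1}(j+1)k_j=(n-k)+k=n$, so $\Lambda_2\subseteq\Lambda^k_n$.

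For the reverse inclusion, fix $(k_1,\ldots,k_{n-k+1})\in\Lambda^k_n$. If $k_1\geq 1$, then subtracting $1$ from the first entry yields a vector in $\Lambda^{k-1}_{n-1}$, placing the original tuple in $\Lambda_1$. If $k_1=0$, set $\ell_i:=k_{i+1}$ for $i=1,\ldots,n-k$; these satisfy $\sum_i\ell_i=k$ and $\sum_i i\,\ell_i=n-k$. The one non-trivial step is to verify that $\ell_i=0$ for every $i>n-2k+1$, which I would establish via the inequality
\begin{equation*}
n-k=\sum_{j=1}^{n-k}j\,\ell_j\;\geq\;i\,\ell_i+\sum_{j\neq i}\ell_j\;=\;(i-1)\ell_i+k,
\end{equation*}
valid whenever $\ell_i\geq 1$. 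This forces $(i-1)\leq(i-1)\ell_i\leq n-2k$, so $i\leq n-2k+1$. Truncating the trailing zeros then exhibits $(\ell_1,\ldots,\ell_{n-2k+1})$ as an element of $\Lambda^k_{n-k}$, and the original tuple lies in $\Lambda_2$.

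The dichotomy on $k$ comes from the same estimate: when $k>[n/2]$ we have $n-2k<0$, so $(i-1)\ell_i\leq n-2k$ is unachievable for any $i\geq 1$ with $\ell_i\geq 1$. Hence the case $k_1=0$ cannot occur, $\Lambda^k_{n-k}$ is vacuous, and $\Lambda^k_n=\Lambda_1$. The only genuine obstacle I foresee is the support estimate above that localizes any zero-first-coordinate tuple within the first $n-2k+1$ positions; the remainder of the argument is bookkeeping on the two defining constraints of $\Lambda^k_n$.
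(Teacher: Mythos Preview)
Your argument is correct. The inclusion $\Lambda_1\cup\Lambda_2\subseteq\Lambda^k_n$ is verified as you say, the case split on $k_1$ for the reverse inclusion is sound, and the support estimate $(i-1)\ell_i\le n-2k$ correctly forces both the truncation of trailing zeros and the impossibility of the $k_1=0$ case when $k>[n/2]$.

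The route, however, differs from the paper's. The paper does not argue directly on $\Lambda^k_n$; it simply invokes Lemma~\ref{l2} and Theorem~\ref{t1a}, i.e., it transfers the already-established decomposition of $\Theta^k_n$ through the bijection of Lemma~\ref{l3} (strip or append the $k-1$ trailing zeros). Your proof instead reproves the content of Theorem~\ref{t1a} from scratch, but phrased for the shorter tuples in $\Lambda^k_n$. What you gain is a self-contained argument that does not rely on the $\Theta$-version as a black box, and your support inequality makes explicit why the $k_1=0$ branch disappears for $k>[n/2]$. What the paper gains is brevity: once Theorem~\ref{t1a} is in hand, the $\Lambda$-statement is a one-line corollary of the truncation correspondence, with no need to rerun the partition bookkeeping.
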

\begin{proof}
	The proof is evident on using Lemma \ref{l2} and Theorem \ref{t1a}.
\end{proof}
\begin{table}[ht]\label{table}
	\caption{The $6\times 6$ array of the partition vectors for Bell polynomials.}
	\begin{tabular}{ccccccc}
		\hline
		$n\setminus k$\quad\quad\quad& $1$  \quad& $2$\quad& $3$ \quad& $4$ \quad& $5$ \quad& $6$\\
		\hline\\
		$1$\quad& $(1)$ \quad& \quad& \quad& \quad& \quad&\\
		
		$2$\quad& $(0,1)$ \quad& $(2)$ \quad& \quad& \quad& \quad&\\
		
		$3$\quad& $(0,0,1)$ \quad& $(1,1)$ \quad& $(3)$\quad& \quad& \quad&\\
		
		$4$\quad& $(0,0,0,1)$ \quad& $(1,0,1)$ \quad& $(2,1)$\quad& $(4)$\quad& \quad&\\
		
		\quad&\quad& $(0,2,0)$ \quad& \quad& \quad& \quad&\\
		
		$5$\quad& $(0,0,0,0,1)$ \quad& $(1,0,0,1)$ \quad& $(2,0,1)$\quad& $(3,1)$\quad& $(5)$\quad&\\
		
		\quad&\quad& $(0,1,1,0)$ \quad& $(1,2,0)$\quad& \quad& \quad&\\
		
		$6$\quad& $(0,0,0,0,0,1)$ \quad& $(1,0,0,0,1)$ \quad& $(2,0,0,1)$\quad& $(3,0,1)$\quad& $(4,1)$\quad&$(6)$\\
		
		\quad&\quad& $(0,1,0,1,0)$ \quad& $(1,1,1,0)$\quad& $(2,2,0)$\quad& \quad&\\
		
		\quad&\quad& $(0,0,2,0,0)$ \quad& $(0,3,0,0)$\quad& \quad& \quad&\\
		\hline
	\end{tabular}
\end{table}
For $1\leq n\leq 6$ the vectors for the set $\Lambda^{k}_{n}$ is listed in Table 1. 

A relationship between Adomian polynomials and Bell polynomials was first obtained by {\small{\sc Abbaoui}} {\it et al.} (1995). Here we establish a relationship between Adomian and partial exponential Bell polynomials, which is different from the one proved in {\small{\sc Abbaoui}} {\it et al.} (1995).
\begin{theorem}\label{t1}
	Let $A_n$, $n\geq 1$, be the $n$-th Adomian polynomial for the nonlinear term $N(u)$. Then
	\begin{equation}\label{2.1we}
	A_n(u_0,u_1,\ldots,u_n)=\frac{1}{n!}\sum_{k=1}^{n‎}B_{n,k}(1!u_1,2!u_2,\ldots,(n-k+1)!u_{n-k+1})N^{(k)}(u_0).
	\end{equation}
\end{theorem}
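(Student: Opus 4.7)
The plan is to reduce the claim to Rach's representation~(\ref{1.7})--(\ref{1.8}) and then recognize the coefficients $C(k,n)$ as rescaled partial exponential Bell polynomials. Starting from
\[
A_n(u_0,u_1,\ldots,u_n)=\sum_{k=1}^{n}C(k,n)\,N^{(k)}(u_0),
\]
it suffices to establish the pointwise identity
\[
C(k,n)=\frac{1}{n!}\,B_{n,k}\bigl(1!u_1,\,2!u_2,\,\ldots,\,(n-k+1)!u_{n-k+1}\bigr)
\]
for every $1\le k\le n$, and substitute this into Rach's formula.

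To prove this identity, I would start from the definition~(\ref{1.10}) of $B_{n,k}$ and substitute $u_j\mapsto j!u_j$. The factors $1/j!^{k_j}$ in~(\ref{1.10}) cancel the $(j!)^{k_j}$ arising from the substitution, so
\[
\frac{1}{n!}B_{n,k}\bigl(1!u_1,\ldots,(n-k+1)!u_{n-k+1}\bigr)=\sum_{\Lambda^k_n}\prod_{j=1}^{n-k+1}\frac{u_j^{k_j}}{k_j!}.
\]
Comparing with~(\ref{1.8}), the right-hand side differs from $C(k,n)$ only in the index set: $C(k,n)$ is a sum over $\Theta^k_n\subset\mathbb N_0^n$ of products over $j=1,\ldots,n$, whereas the Bell side is a sum over $\Lambda^k_n\subset\mathbb N_0^{n-k+1}$ of products over $j=1,\ldots,n-k+1$. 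By Lemma~\ref{l2} every vector in $\Theta^k_n$ has zero entries in positions $n-k+2,\ldots,n$, so the extra factors $u_j^{k_j}/k_j!$ there all equal $1$; the bijection of Lemma~\ref{l3} (which identifies a vector in $\Lambda^k_n$ with its zero-padded image in $\Theta^k_n$) then shows the two summations agree termwise, as formalized by Lemma~\ref{l4}.

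Combining the two displays yields the desired representation of $C(k,n)$, and plugging it into Rach's formula~(\ref{1.7}) gives
\[
A_n(u_0,u_1,\ldots,u_n)=\frac{1}{n!}\sum_{k=1}^{n}B_{n,k}\bigl(1!u_1,\ldots,(n-k+1)!u_{n-k+1}\bigr)N^{(k)}(u_0),
\]
which is~(\ref{2.1we}). There is no real obstacle: the only point requiring care is the bookkeeping of the two partition sets $\Theta^k_n$ and $\Lambda^k_n$, and this is handled entirely by Lemmas~\ref{l2}--\ref{l4} already established above.
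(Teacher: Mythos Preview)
Your argument is correct and follows essentially the same route as the paper: both reduce (\ref{2.1we}) to the identity $C(k,n)=\tfrac{1}{n!}B_{n,k}(1!u_1,\ldots,(n-k+1)!u_{n-k+1})$, obtain it by substituting $j!u_j$ into (\ref{1.10}) so the factorials cancel, and then identify the sums over $\Lambda^k_n$ and $\Theta^k_n$ via the zero-padding observation (Lemmas~\ref{l2}--\ref{l4}) before invoking Rach's formula (\ref{1.7}).
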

\begin{proof}
	On setting $k_{n-k+j}=0$ for $j=2,3,\ldots,k$ and using Lemma \ref{l4}, we have
	\begin{eqnarray*}
		B_{n,k}(1!u_1,2!u_2,\ldots,(n-k+1)!u_{n-k+1})&=&n!\underset{\sum_{j=1}^{n-k+1}jk_j=n\ ,\ k_j\in\mathbb{N}_0}{\sum_{\sum_{j=1}^{n-k+1}k_j=k}}\prod_{j=1}^{n-k+1}\frac{u_j^{k_j}}{k_j!}\nonumber\\
		&=&n!\underset{\sum_{j=1}^{n}jk_j=n\ ,\ k_j\in\mathbb{N}_0}{\sum_{\sum_{j=1}^{n}k_j=k}}\prod_{j=1}^{n}\frac{u_j^{k_j}}{k_j!}\nonumber\\
		&=&n!C(k,n).
	\end{eqnarray*}
	Hence,
	\begin{equation}\label{2.2nb}
	C(k,n)=\frac{1}{n!}B_{n,k}(1!u_1,2!u_2,\ldots,(n-k+1)!u_{n-k+1}).
	\end{equation}
	The proof completes on using (\ref{1.7}).
\end{proof}
From (\ref{2.2nb}) it is clear that the homogeneous polynomials of order $k$ \textit{i.e.} $C(k,n)$'s are the partial exponential Bell polynomials. In view of (\ref{1.10}) and (\ref{nn1.10}), the partial exponential Bell polynomial is connected to the partial ordinary Bell polynomial as follows:
\begin{equation}\label{ew1.10}
\hat{B}_{n,k}(u_1,u_2,\ldots,u_{n-k+1})=\frac{k!}{n!}B_{n,k}(1!u_1,2!u_2,\ldots,(n-k+1)!u_{n-k+1}).
\end{equation}
\begin{corollary}\label{cdt1}
	The $n$-th Adomian polynomial for $N(u)$ is given by
	\begin{equation*}\label{2.1}
	A_n(u_0,u_1,\ldots,u_n)=\sum_{k=1}^{n‎}\frac{1}{k!}\hat{B}_{n,k}(u_1,u_2,\ldots,u_{n-k+1})N^{(k)}(u_0).
	\end{equation*}
\end{corollary}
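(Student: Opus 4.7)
The plan is to derive the corollary directly from Theorem \ref{t1} together with the explicit link between partial exponential and partial ordinary Bell polynomials recorded in equation (\ref{ew1.10}). Since Theorem \ref{t1} already expresses $A_n$ as a weighted sum of the $B_{n,k}$ evaluated at $(1!u_1,2!u_2,\ldots,(n-k+1)!u_{n-k+1})$, the only work is to rewrite each such $B_{n,k}$ in terms of the corresponding $\hat{B}_{n,k}$ and to absorb the combinatorial factors.

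Concretely, I would first solve (\ref{ew1.10}) for $B_{n,k}$, obtaining
\[
B_{n,k}(1!u_1,2!u_2,\ldots,(n-k+1)!u_{n-k+1}) \;=\; \frac{n!}{k!}\,\hat{B}_{n,k}(u_1,u_2,\ldots,u_{n-k+1}).
\]
Next I would substitute this identity into the right-hand side of (\ref{2.1we}) from Theorem \ref{t1}, so that the factor $1/n!$ cancels against the $n!$ coming from the conversion, leaving the clean coefficient $1/k!$ in front of each term. Summing over $k=1,\ldots,n$ then yields exactly the claimed formula.

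The only nontrivial ingredient is (\ref{ew1.10}) itself, which compares (\ref{1.10}) and (\ref{nn1.10}) term by term on the partition set $\Lambda^k_n$: both polynomials have the same summation index set, and the ratio of their typical summands is precisely $k!\prod_j (j!)^{k_j}/n!$, which equals $k!/n!$ after using $\prod_j(j!)^{k_j}$ to rescale the variables $u_j \mapsto j!\,u_j$. Since this rescaling is exactly what appears inside $B_{n,k}(1!u_1,\ldots,(n-k+1)!u_{n-k+1})$, the identity (\ref{ew1.10}) is immediate and so is the corollary. There is no real obstacle here; the statement is a pure bookkeeping consequence of Theorem \ref{t1} and the normalization difference between the two families of Bell polynomials.
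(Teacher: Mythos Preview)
Your proposal is correct and matches exactly the approach implicit in the paper: the corollary is placed immediately after equation (\ref{ew1.10}) precisely because substituting $B_{n,k}(1!u_1,\ldots,(n-k+1)!u_{n-k+1})=\tfrac{n!}{k!}\hat{B}_{n,k}(u_1,\ldots,u_{n-k+1})$ into Theorem \ref{t1} cancels the $1/n!$ and yields the stated formula. There is nothing to add; your bookkeeping is spot on.
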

On using (\ref{2.2nb}), the following results follows from the recursive algorithm of $C(k,n)$ given by Corollary 1 of {\small{\sc Duan}} (2010).
\begin{corollary}\label{fgh3}
	For all $n\in\mathbb{N}$, we have $B_{n,1}(1!u_1,2!u_2,\ldots,n!u_n)=n!u_n$ and $B_{n,n}(u_1)=u_1^n$. When $\left[\frac{n}{2}\right]<k\leq n$ and $n\in\mathbb{N}_2$, we have
	\begin{align*}
	B_{n,k}(1!u_1,2!u_2,\ldots,&(n-k+1)!u_{n-k+1})\\
	&=n\left.B_{n-1,k-1}(1!u_1,2!u_2,\ldots,(n-k+1)!u_{n-k+1})\right|_{k_1\rightarrow k_1+1}.
	\end{align*}
	For $2\leq k\leq \left[\frac{n}{2}\right]$ and $n\in\mathbb{N}_4$, we have
	\begin{align*}
	B_{n,k}(1!u_1,2!u_2,\ldots,&(n-k+1)!u_{n-k+1})\\
	&=n\left.B_{n-1,k-1}(1!u_1,2!u_2,\ldots,(n-k+1)!u_{n-k+1})\right|_{k_1\rightarrow k_1+1}\\
	&\ \ \ +(n)_kB_{n-k,k}(1!u_2,2!u_3,\ldots,(n-2k+1)!u_{n-2k+2}),
	\end{align*}
	where $(n)_k=n(n-1)\ldots (n-k+1)$, denotes the falling factorials.
\end{corollary}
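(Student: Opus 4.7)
The plan is to work directly with the partition-set decomposition of Theorem~\ref{t2a}, using equation~(\ref{2.2nb}) to translate between $C(k,n)$ and $B_{n,k}(1!u_1, 2!u_2, \ldots, (n-k+1)!u_{n-k+1})$. This is equivalent to what the remark before the corollary suggests, namely quoting Corollary~1 of Duan~(2010) for the recursive algorithm of $C(k,n)$ and converting through~(\ref{2.2nb}); I prefer the direct route because it stays entirely within the notation already introduced in this paper. The two base identities are immediate: by Lemmas~\ref{l1} and \ref{l1a} the set $\Lambda^1_n$ consists of the single vector $(0,0,\ldots,0,1)$, so substituting into (\ref{1.10}) with $u_j$ replaced by $j!\,u_j$ leaves only the $j=n$ factor and yields $B_{n,1}(1!u_1,\ldots,n!u_n)=n!\,u_n$; likewise $\Lambda^n_n=\{(n)\}$ gives $B_{n,n}(u_1)=u_1^n$ in one line.

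For the two recursive formulas I would split the defining sum
\[
B_{n,k}(1!u_1,\ldots,(n-k+1)!u_{n-k+1}) = n! \sum_{(k_1,\ldots,k_{n-k+1})\in\Lambda^k_n} \prod_{j=1}^{n-k+1}\frac{u_j^{k_j}}{k_j!}
\]
along $\Lambda^k_n = \Lambda_1 \cup \Lambda_2$ from Theorem~\ref{t2a}, with $\Lambda_2$ empty when $[n/2]<k\le n$. The $\Lambda_1$ block is obtained from $\Lambda^{k-1}_{n-1}$ by the shift $k_1\mapsto k_1+1$, and after absorbing one factor of $n$ to convert $(n-1)!$ into $n!$ its contribution matches $n\cdot\left.B_{n-1,k-1}(1!u_1,\ldots,(n-k+1)!u_{n-k+1})\right|_{k_1\to k_1+1}$, where the bar denotes the substitution inside the sum-of-monomials representation of the Bell polynomial. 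The $\Lambda_2$ block is indexed by $\Lambda^k_{n-k}$, and after re-indexing $j\mapsto j+1$ in the product (the prepended zero and the $k-1$ trailing zeros contribute trivial unit factors) the resulting sum becomes $\frac{n!}{(n-k)!}B_{n-k,k}(1!u_2,2!u_3,\ldots,(n-2k+1)!u_{n-2k+2})$, which equals $(n)_k\,B_{n-k,k}(\ldots)$.

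The main bookkeeping hurdle is the $\Lambda_2$ piece: one has to track how the prepended zero shifts every occupied slot up by one, and verify that pairing $u_{j+1}$ with the $j$-th Bell argument lines up exactly with the factorial prefactor $j!$ inside $B_{n-k,k}$ so that the overall coefficient collapses to the falling factorial $(n)_k$. Once these two contributions are identified, summing them gives the recursion in the range $2\le k\le[n/2]$, while keeping only the $\Lambda_1$ piece gives the recursion in the range $[n/2]<k\le n$.
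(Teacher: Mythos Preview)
Your proposal is correct and follows essentially the same route as the paper: the paper simply cites Duan's Corollary~1 for the recursion of $C(k,n)$ (which rests on the decomposition $\Theta^k_n=\Theta_1\cup\Theta_2$ of Theorem~\ref{t1a}) and then converts to $B_{n,k}$ via~(\ref{2.2nb}), whereas you unpack the same partition-set decomposition directly in the $\Lambda^k_n$ coordinates using Theorem~\ref{t2a}. Since Theorem~\ref{t2a} is itself derived from Theorem~\ref{t1a} and Lemma~\ref{l4} identifies the two summations, the two arguments are interchangeable; your version is just the fully written-out form of what the paper leaves as a one-line citation.
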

\begin{corollary}\label{fgh34}
	For all $n\in\mathbb{N}$, we have $\hat{B}_{n,1}(u_1,u_2,\ldots,u_n)=u_n$ and $\hat{B}_{n,n}(u_1)=u_1^n$. For $n\in\mathbb{N}_2$ and $\left[\frac{n}{2}\right]<k\leq n$, we have
	\begin{equation*}
	\hat{B}_{n,k}(u_1,u_2,\ldots,u_{n-k+1})=k\left.\hat{B}_{n-1,k-1}(u_1,u_2,\ldots,u_{n-k+1})\right|_{k_1\rightarrow k_1+1}.
	\end{equation*}
	When $2\leq k\leq \left[\frac{n}{2}\right]$ and $n\in\mathbb{N}_4$, we have
	\begin{align*}
	\hat{B}_{n,k}(u_1,u_2,\ldots,u_{n-k+1})&=k\left.\hat{B}_{n-1,k-1}(u_1,u_2,\ldots,u_{n-k+1})\right|_{k_1\rightarrow k_1+1}\\
	&\ \ \ +\hat{B}_{n-k,k}(u_2,u_3,\ldots,u_{n-2k+2}).
	\end{align*}
\end{corollary}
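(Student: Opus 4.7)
The plan is to derive Corollary \ref{fgh34} directly from Corollary \ref{fgh3} using the conversion identity (\ref{ew1.10}) between the partial ordinary and partial exponential Bell polynomials, namely
\[
\hat{B}_{n,k}(u_1,\ldots,u_{n-k+1})=\frac{k!}{n!}\,B_{n,k}\bigl(1!u_1,2!u_2,\ldots,(n-k+1)!u_{n-k+1}\bigr).
\]
No combinatorial argument over the partition set $\Lambda^k_n$ is needed, since Corollary \ref{fgh3} already encodes the partition-vector splitting of Theorem \ref{t2a}; one merely has to track the factorial prefactors through the substitution.

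First I would dispatch the boundary cases. For the edge $k=1$, substituting $B_{n,1}(1!u_1,\ldots,n!u_n)=n!u_n$ into (\ref{ew1.10}) gives $\hat{B}_{n,1}=(1!/n!)\cdot n!u_n=u_n$; for $k=n$, $B_{n,n}(u_1)=u_1^n$ yields $\hat{B}_{n,n}=(n!/n!)u_1^n=u_1^n$.

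Next I would handle the first recursion, valid for $[n/2]<k\leq n$. Using (\ref{ew1.10}) at levels $(n,k)$ and $(n-1,k-1)$ gives
\[
\hat{B}_{n,k}=\frac{k!}{n!}B_{n,k},\qquad B_{n-1,k-1}=\frac{(n-1)!}{(k-1)!}\hat{B}_{n-1,k-1}.
\]
Substituting the corresponding identity from Corollary \ref{fgh3}, namely $B_{n,k}=n\,B_{n-1,k-1}|_{k_1\to k_1+1}$, the factorial coefficients telescope as
\[
\frac{k!}{n!}\cdot n\cdot\frac{(n-1)!}{(k-1)!}=k,
\]
producing exactly the stated relation $\hat{B}_{n,k}=k\,\hat{B}_{n-1,k-1}|_{k_1\to k_1+1}$.

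Finally, for the range $2\leq k\leq [n/2]$ with $n\in\mathbb{N}_4$, the same substitution is applied to the two-term recursion of Corollary \ref{fgh3}. The first summand is handled as above. For the second, I would use (\ref{ew1.10}) at level $(n-k,k)$ with the shifted arguments $u_2,\ldots,u_{n-2k+2}$, giving $B_{n-k,k}(1!u_2,\ldots,(n-2k+1)!u_{n-2k+2})=\frac{(n-k)!}{k!}\hat{B}_{n-k,k}(u_2,\ldots,u_{n-2k+2})$. The overall coefficient simplifies as
\[
\frac{k!}{n!}\cdot(n)_k\cdot\frac{(n-k)!}{k!}=\frac{k!}{n!}\cdot\frac{n!}{(n-k)!}\cdot\frac{(n-k)!}{k!}=1,
\]
yielding the clean additional term $\hat{B}_{n-k,k}(u_2,\ldots,u_{n-2k+2})$. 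There is no genuine obstacle here; the only thing to be careful about is keeping track of how many arguments the lower-order Bell polynomials take, in particular that $\hat{B}_{n-1,k-1}$ takes $(n-1)-(k-1)+1=n-k+1$ arguments, matching the arguments of $\hat{B}_{n,k}$, so the substitution $k_1\to k_1+1$ on the partition vectors is well-defined in both recursions.
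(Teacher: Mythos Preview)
Your proposal is correct and matches the paper's implicit approach: the paper provides no explicit proof of Corollary~\ref{fgh34}, simply stating it immediately after Corollary~\ref{fgh3}, with the understanding that it follows from the latter via the conversion identity~(\ref{ew1.10}). Your factorial bookkeeping is accurate, including the point that the substitution $k_1\to k_1+1$ commutes with~(\ref{ew1.10}) because after the argument replacement $u_j\mapsto j!u_j$ the summands in $B_{n-1,k-1}$ and $\hat{B}_{n-1,k-1}$ differ only by the overall constant $(n-1)!/(k-1)!$.
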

An alternate recursive algorithm for the Bell polynomials follow from Corollary 3 and 4 of {\small{\sc Duan}} (2010).
\begin{corollary}\label{fgh37}
	Let $n$ be any positive integer. Then for $2\leq k\leq n$, we have
	\begin{align*}
	B&_{n,k}(1!u_1,2!u_2,\ldots,(n-k+1)!u_{n-k+1})\\
	&=\sum_{j=0}^{n-k}(j+1)(n-1)_ju_{j+1}B_{n-j-1,k-1}(1!u_1,2!u_2,\ldots,(n-j-k+1)!u_{n-j-k+1}).
	\end{align*}
	Alternatively, for $2\leq k\leq n-1$, we have
	\begin{align*}
	B_{n,k}(1!u_1,2!u_2,\ldots,&(n-k+1)!u_{n-k+1})\\
	&=u_1B_{n-1,k-1}(1!u_1,2!u_2,\ldots,(n-k+1)!u_{n-k+1})\\
	&\ \ \ +\sum_{j=1}^{n-k}(j+1)u_{j+1}\frac{\partial}{\partial u_j}B_{n-1,k}(1!u_1,2!u_2,\ldots,(n-k)!u_{n-k}).
	\end{align*}
\end{corollary}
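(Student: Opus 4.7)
The plan is to translate the recursive algorithms for the Adomian coefficients $C(k,n)$ given in Corollaries 3 and 4 of \textsc{Duan} (2010) into identities for the partial exponential Bell polynomials via the conversion \eqref{2.2nb} established in Theorem \ref{t1}.

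For the first identity, I would invoke Duan's Corollary 3, which can be written as
\[
n\, C(k,n) = \sum_{j=0}^{n-k}(j+1)\, u_{j+1}\, C(k-1, n-j-1).
\]
Substituting $C(k,n) = \frac{1}{n!} B_{n,k}(1!u_1, 2!u_2, \ldots, (n-k+1)! u_{n-k+1})$ on both sides, multiplying through by $(n-1)!$, and regrouping the ensuing factorials via $\frac{(n-1)!}{(n-j-1)!} = (n-1)(n-2)\cdots(n-j) = (n-1)_j$, I would obtain precisely the stated coefficient $(j+1)(n-1)_j u_{j+1}$ in front of $B_{n-j-1,k-1}$. A brief verification that the shift $n \mapsto n-j-1$ in the first subscript produces a Bell polynomial in $n-j-k+1$ variables scaled by $1!, 2!, \ldots, (n-j-k+1)!$ would confirm that the arguments on the right-hand side match the statement.

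For the alternative recursion, I would start from Duan's Corollary 4, which has the form
\[
n\, C(k,n) = u_1\, C(k-1, n-1) + \sum_{j=1}^{n-k}(j+1)\, u_{j+1}\, \frac{\partial}{\partial u_j} C(k, n-1).
\]
Multiplying through by $(n-1)!$ and applying \eqref{2.2nb} to each occurrence of $C$ converts the identity directly into the second claim. The key observation here is that the partial derivative $\partial/\partial u_j$ is taken with respect to the unscaled variable $u_j$, so it commutes with the multiplicative constants $1!, 2!, \ldots, (n-k)!$ that appear inside $B_{n-1,k}(1!u_1, 2!u_2, \ldots, (n-k)!u_{n-k})$, and no additional factorial factors are introduced in the translation.

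The main technical point, rather than a true obstacle, is the careful factorial accounting in the first identity; once the equality $(n-1)!/(n-j-1)! = (n-1)_j$ is used, both recurrences follow by routine substitution from Duan's results.
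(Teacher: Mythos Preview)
Your proposal is correct and follows exactly the route the paper intends: the corollary is stated immediately after the remark that it ``follow[s] from Corollary 3 and 4 of {\sc Duan} (2010),'' and your translation via \eqref{2.2nb}, together with the factorial identity $(n-1)!/(n-j-1)!=(n-1)_j$, is precisely the substitution required. The only cosmetic point is that in the second recursion the partial derivative acts on the composite map $u_1,\ldots,u_{n-k}\mapsto B_{n-1,k}(1!u_1,\ldots,(n-k)!u_{n-k})$, so the justification is simply that \eqref{2.2nb} is an equality of functions in the $u_j$'s and may be differentiated term by term; your description of this step is informal but accurate.
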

\begin{corollary}\label{fgh37}
	Let $n$ be any positive integer. Then for $2\leq k\leq n$, we have
	\begin{equation*}
	\hat{B}_{n,k}(u_1,u_2,\ldots,u_{n-k+1})=\frac{k}{n}\sum_{j=0}^{n-k}(j+1)u_{j+1}\hat{B}_{n-j-1,k-1}(u_1,u_2,\ldots,u_{n-j-k+1}).
	\end{equation*}
	Alternatively, for $2\leq k\leq n-1$, we have
	\begin{align*}
	\hat{B}_{n,k}(u_1,u_2,\ldots,u_{n-k+1})&=\frac{k}{n}u_1\hat{B}_{n-1,k-1}(u_1,u_2,\ldots,u_{n-k+1})\\
	&\ \ \ +\frac{1}{n}\sum_{j=1}^{n-k}(j+1)u_{j+1}\frac{\partial}{\partial u_j}\hat{B}_{n-1,k}(u_1,u_2,\ldots,u_{n-k}).
	\end{align*}
\end{corollary}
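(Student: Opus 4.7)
The plan is to deduce this corollary directly from the immediately preceding corollary (the $B_{n,k}$ version) by means of the conversion identity (\ref{ew1.10}), together with the base cases $\hat{B}_{n,1}(u_1,\ldots,u_n)=u_n$ and $\hat{B}_{n,n}(u_1)=u_1^n$ already established in Corollary \ref{fgh34}. The strategy in both parts is the same: take the corresponding identity for $B_{n,k}$, multiply through by the factor $k!/n!$ so that (\ref{ew1.10}) converts the left-hand side to $\hat{B}_{n,k}$, and then repeatedly apply (\ref{ew1.10}) (with appropriate indices) on the right-hand side, tracking the factorial prefactors.

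For the first identity, I would multiply the corresponding formula by $k!/n!$. On the right, each term $B_{n-j-1,k-1}(1!u_1,\ldots,(n-j-k+1)!u_{n-j-k+1})$ is replaced by $\frac{(n-j-1)!}{(k-1)!}\hat{B}_{n-j-1,k-1}(u_1,\ldots,u_{n-j-k+1})$ via (\ref{ew1.10}). The bookkeeping of constants collapses cleanly: using $(n-1)_j(n-j-1)!=(n-1)!$, one finds
$$\frac{k!}{n!}\cdot (n-1)_j\cdot\frac{(n-j-1)!}{(k-1)!}=\frac{k!(n-1)!}{n!(k-1)!}=\frac{k}{n},$$
independently of $j$. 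This pulls the uniform factor $k/n$ out of the sum and delivers the stated recursion.

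For the alternative identity, the same multiplication by $k!/n!$ turns the first summand $u_1 B_{n-1,k-1}$ into $\frac{k}{n}u_1\hat{B}_{n-1,k-1}$ by the exact same factorial cancellation as above. The more delicate point is the partial-derivative summand. Here one must interpret $\partial/\partial u_j$ correctly after the substitution of arguments $x_i\mapsto i!u_i$: viewed as a polynomial in $u_1,\ldots,u_{n-k}$, (\ref{ew1.10}) gives the identity
$$B_{n-1,k}(1!u_1,\ldots,(n-k)!u_{n-k})=\frac{(n-1)!}{k!}\hat{B}_{n-1,k}(u_1,\ldots,u_{n-k}),$$
so differentiating both sides in $u_j$ yields
$$\frac{\partial}{\partial u_j}B_{n-1,k}(1!u_1,\ldots,(n-k)!u_{n-k})=\frac{(n-1)!}{k!}\frac{\partial}{\partial u_j}\hat{B}_{n-1,k}(u_1,\ldots,u_{n-k}).$$
The constant now combines with the overall prefactor as $\frac{k!}{n!}\cdot\frac{(n-1)!}{k!}=\frac{1}{n}$, reproducing the second term in the stated formula.

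The only step that demands some care is interpreting $\partial/\partial u_j$ through the change of variables $x_i\mapsto i!u_i$; once that point is settled, everything else is routine factorial bookkeeping, so I foresee no serious obstacle.
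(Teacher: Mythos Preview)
Your proposal is correct and matches the paper's intended (implicit) route: the paper states that these recursions follow from Duan's Corollaries 3 and 4 via (\ref{2.2nb}), and the ordinary Bell polynomial version is then obtained from the exponential one through the conversion formula (\ref{ew1.10}), exactly as you do. Your factorial bookkeeping and your handling of $\partial/\partial u_j$ as a derivative in the $u$-variables (rather than in the slot variables of $B_{n-1,k}$) are both correct, and the latter is precisely the right reading given that (\ref{ew1.10}) is an identity of polynomials in $u_1,\ldots,u_{n-k+1}$.
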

\noindent {\small{\sc Remark 1.}}
For the linear case $N(u)=u$, we have $A_0(u_0)=u_0$ and
\begin{equation*}\label{2.1kk}
A_n(u_0,u_1,\ldots,u_n)=\frac{1}{n!}B_{n,1}(1!u_1,2!u_2,\ldots,n!u_{n})=u_n,\ \ n\in\mathbb{N}.
\end{equation*}
\noindent {\small{\sc Example 1.}} The Adomian polynomials for $N(u)= e^u$ are $A_0(u_0)= e^{u_0}$ and
\begin{eqnarray*}\label{2.3} 
	A_1(u_0,u_1)&=&B_{1,1}(1!u_1)e^{u_0}=u_1e^{u_0},\\
	A_2(u_0,u_1,u_2)&=&\frac{e^{u_0}}{2!}\left[B_{2,1}(1!u_1,2!u_2)+B_{2,2}(1!u_1)\right]=\left(u_2 + \frac{u_1^2}{2}\right)e^{u_0},\\
	A_3(u_0,u_1,u_2,u_3)&=&\frac{e^{u_0}}{3!}\left[B_{3,1}(1!u_1,2!u_2,3!u_3)+B_{3,2}(1!u_1,2!u_2)+B_{3,3}(1!u_1)\right]\\
	&=&\left(u_3 + u_1u_2+\frac{u_1^3}{6}\right)e^{u_0}.\\
	&\vdots&\\
	A_n(u_0,u_1,\ldots,u_n)&=&\frac{e^{u_0}}{n!}\sum_{k=1}^{n‎}B_{n,k}(1!u_1,2!u_2,\ldots,(n-k+1)!u_{n-k+1})\\
	&=&e^{u_0}\sum_{k=1}^{n‎}\frac{1}{k!}\hat{B}_{n,k}(u_1,u_2,\ldots,u_{n-k+1}).
\end{eqnarray*}
Therefore the $n$-th Adomian polynomial for exponential non-linearity can be expressed in terms of the $n$-th complete exponential Bell polynomial as follows:
\begin{equation*}
A_n(u_0,u_1,\ldots,u_n)=\frac{e^{u_0}}{n!}B_{n}(1!u_1,2!u_2,\ldots,n!u_{n}).
\end{equation*}

\section{Some identities for Bell polynomials}
We now state and prove some new identities for the partial exponential Bell polynomials. These are achieved by solving certain ordinary differential equations using two different methods, namely, the separation of variable method and Adomian decomposition method. The corresponding identities for the partial ordinary Bell polynomials are given as corollaries.
\begin{theorem}\label{t2kk}
	Let $\alpha,\beta$ be any real numbers and $n$ be any positive integer. Then
	\begin{equation}\label{2.4kk}
	\sum_{k=1}^{n‎}\left(-\beta\right)^kB_{n,k}(0!\alpha,-1!\alpha^2\beta,\ldots,\left(-1\right)^{n-k}(n-k)!\alpha^{n-k+1}\beta^{n-k})=n!(-\alpha\beta)^n.
	\end{equation}
\end{theorem}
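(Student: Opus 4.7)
The plan is to recognize that the left-hand side of (\ref{2.4kk}) has exactly the structure of formula (\ref{2.1we}) of Theorem~\ref{t1}, and then to exhibit an initial value problem whose Adomian solution can be computed two ways --- explicitly by separation of variables and term-by-term via ADM --- forcing the two expressions for $A_n$ to coincide.

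The natural candidate is
\[
\phi'(t)=\alpha\,e^{-\beta\phi(t)},\qquad\phi(0)=0,
\]
with nonlinearity $N(v)=\alpha e^{-\beta v}$. The choice is dictated by $N^{(k)}(0)=\alpha(-\beta)^{k}$, which supplies the factor $(-\beta)^{k}$ present in (\ref{2.4kk}). Separation of variables yields
\[
\phi(t)=\frac{1}{\beta}\log(1+\alpha\beta t)=\sum_{j=1}^{\infty}\frac{(-1)^{j-1}\alpha^{j}\beta^{j-1}}{j}\,t^{j}.
\]
Running ADM with $\phi_{0}=0$ and $\phi_{n}(t)=\int_{0}^{t}A_{n-1}(\phi_{0},\ldots,\phi_{n-1})(s)\,ds$, a short induction (initialized by $A_{0}=N(0)=\alpha$ and $\phi_{1}=\alpha t$) combined with uniqueness of the analytic solution above identifies
\[
\phi_{n}(t)=\frac{(-1)^{n-1}\alpha^{n}\beta^{n-1}}{n}\,t^{n},\qquad A_{n}(t)=\phi_{n+1}'(t)=(-1)^{n}\alpha^{n+1}\beta^{n}\,t^{n}.
\]

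Theorem~\ref{t1}, applied to the same $N$, expresses $A_n$ in the alternative form
\[
A_n=\frac{\alpha}{n!}\sum_{k=1}^{n}(-\beta)^{k}B_{n,k}\!\bigl(0!\alpha\,t,\ -1!\alpha^{2}\beta\,t^{2},\ \ldots,\ (-1)^{n-k}(n-k)!\alpha^{n-k+1}\beta^{n-k}t^{n-k+1}\bigr),
\]
after substituting $j!\phi_{j}(t)=(-1)^{j-1}(j-1)!\alpha^{j}\beta^{j-1}t^{j}$. The definition (\ref{1.10}) is quasi-homogeneous: since $\sum jk_{j}=n$ in every index vector of $\Lambda^{k}_{n}$, one has $B_{n,k}(x_{1}t,x_{2}t^{2},\ldots)=t^{n}B_{n,k}(x_{1},x_{2},\ldots)$. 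Pulling out $t^{n}$, equating the two expressions for $A_{n}$, and cancelling $\alpha t^{n}/n!$ delivers (\ref{2.4kk}).

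The main obstacle is bookkeeping: one must track signs, factorials and powers of $t$ carefully so that the $B_{n,k}$-arguments generated via (\ref{2.1we}) match exactly with those prescribed in the theorem. The conceptual steps --- choice of ODE, two-way solution, and the quasi-homogeneity rescaling --- are all quite short once this matching is secured.
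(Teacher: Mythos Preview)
Your proposal is correct and follows essentially the same route as the paper: solve $\phi'=\alpha e^{-\beta\phi}$ both by separation of variables and by ADM, then equate the two expressions for $A_n$ via Theorem~\ref{t1}. The only difference is cosmetic --- the paper takes initial value $u(0)=1$ with $N(u)=e^{-\beta u}$, which introduces factors of $e^{-\beta}$ throughout that eventually cancel, whereas your choice $\phi(0)=0$ with $N(v)=\alpha e^{-\beta v}$ avoids them and is slightly cleaner; the quasi-homogeneity argument you make explicit is used implicitly in the paper when the $t^n$ is pulled out of the partition sum.
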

\begin{proof}
	Consider the following ordinary differential equation
	\begin{equation}\label{2.5kk}
	\frac{du}{dx}=\alpha e^{-\beta u},\ \ \ \ u(0)=1,\ \ \ |\alpha\beta x|<e^\beta.
	\end{equation}
	Equivalently,
	\begin{equation}\label{2.6kk}
	u(x)=u(0)+\alpha \int_0^{x}e^{-\beta u(t)}\,\mathrm{d}t.
	\end{equation}
	In (\ref{2.6kk}), the nonlinear term is $N(u)=e^{-\beta u}$. Substituting $u=\sum_{n=0}^{\infty}u_n$ and $N(u)=\sum_{n=0}^{\infty}A_n$ in the above equation and applying ADM, we get
	\begin{equation*}\label{2.7kk}
	\sum_{n=0}^{\infty}u_n(x)=u(0)+\alpha\sum_{n=0}^{\infty} \int_0^{x}A_n\left(u_0(t),u_1(t),\ldots,u_n(t)\right)\,\mathrm{d}t.
	\end{equation*}
	Therefore, for all $n\geq 0$,
	\begin{equation}\label{2.8kk}
	u_{n+1}(x)=\alpha \int_0^{x}A_{n}\left(u_0(t),u_1(t),\ldots,u_n(t)\right)\,\mathrm{d}t.
	\end{equation}
	On comparing $u_0=u(0)=1$ and hence $u_1,u_2,u_3,u_4$ are recursively obtained as
	\begin{eqnarray*}
		A_0\left(u_0(x)\right)&=&e^{-\beta u_0}=\frac{1}{e^{\beta}},\\
		u_1(x)&=&\alpha\int_0^{x}A_0\left(u_0(t)\right)\,\mathrm{d}t=\frac{\alpha}{e^{\beta}}x,\\
		A_1\left(u_0(x),u_1(x)\right)&=&-\beta u_1e^{-\beta u_0}=-\frac{\alpha\beta}{e^{2\beta}} x,\\
		u_2(x)&=&\alpha\int_0^{x}A_1\left(u_0(t),u_1(t)\right)\,\mathrm{d}t=-\frac{\alpha^2\beta}{2e^{2\beta}} x^2,\\
		A_2\left(u_0(x),u_1(x),u_2(x)\right)&=&\frac{1}{2!}\left(\beta^2u_1^2-2\beta u_2\right)e^{-\beta u_0}=\frac{\alpha^2\beta^2}{e^{3\beta}} x^2,\\
		u_3(x)&=&\alpha\int_0^{x}A_2\left(u_0(t),u_1(t),u_2(t)\right)\,\mathrm{d}t=\frac{\alpha^3\beta^2}{3e^{3\beta}} x^3,\\
		A_3\left(u_0(x),u_1(x),u_2(x),u_3(x)\right)&=&\frac{1}{3!}\left(-\beta^3u_1^3+6\beta^2u_1u_2-6\beta u_3\right)e^{-\beta u_0}\\
		&=&-\frac{\alpha^3\beta^3}{e^{4\beta}} x^3,\\
		u_4(x)&=&\alpha\int_0^{x}A_3\left(u_0(t),u_1(t),u_2(t),u_3(t)\right)\,\mathrm{d}t\\
		&=&-\frac{\alpha^4\beta^3}{4e^{4\beta}} x^4,
	\end{eqnarray*}
	and so on. By using the separation of variables method, it is easy to see that $u(x)=1+\beta^{-1}\ln (1+\alpha\beta e^{-\beta} x)$ is the solution of (\ref{2.5kk}). Since $|\alpha\beta e^{-\beta}x|<1$, by Taylor series expansion of $\ln (1+\alpha\beta e^{-\beta} x)$, we have
	\begin{equation}\label{2.9kk}
	u(x)=1+\sum_{n=1}^{\infty}(-1)^{n+1}\frac{\alpha^n\beta^{n-1}}{ne^{\beta n}}x^n=u(0)+\sum_{n=1}^{\infty}u_n(x).
	\end{equation}
	By uniqueness of the solution of the differential equation (\ref{2.5kk}), the series solution obtained by ADM is consistent with (\ref{2.9kk}). Therefore, the $n$-th component of $u(x)$ is
	\begin{equation}\label{2.10kk}
	u_{n}(x)=(-1)^{n+1}\frac{\alpha^n\beta^{n-1}}{ne^{\beta n}}x^{n},\ \ n\geq1.
	\end{equation}
	Now using Rach formula (\ref{1.7}) in (\ref{2.8kk}), we obtain
	\begin{eqnarray*}\label{2.11kk}
		u_{n+1}(x)&=&\alpha \int_0^{x}\sum_{k=1}^{n}N^{(k)}(u_0)\underset{\Theta^k_n}{\sum}\prod_{j=1}^{n}\frac{u_j^{k_j}(t)}{k_j!}\,\mathrm{d}t,\\
		&=&\frac{\alpha}{e^{\beta (n+1)}} \int_0^{x}t^n\,\mathrm{d}t\sum_{k=1}^{n}\left(-\beta\right)^k\underset{\Theta^k_n}{\sum}\prod_{j=1}^{n}\frac{1}{k_j!}\left(\frac{(-1)^{j+1}\alpha^j\beta^{j-1}}{j}\right)^{k_j}\\
		&=&\frac{\alpha x^{n+1}}{(n+1)e^{\beta (n+1)}}\sum_{k=1}^{n}\left(-\beta\right)^k\underset{\Lambda^k_n}{\sum}\prod_{j=1}^{n-k+1}\frac{1}{k_j!}\left(\frac{(-1)^{j+1}(j-1)!\alpha^j\beta^{j-1}}{j!}\right)^{k_j},
	\end{eqnarray*}
	where the last two steps follow from (\ref{2.10kk}) and Lemma \ref{l4}, respectively. Finally by using (\ref{2.10kk}) and rearranging the terms, we get
	\begin{equation*}
	\sum_{k=1}^{n}\left(-\beta\right)^k\underset{\Lambda^k_n}{\sum}\prod_{j=1}^{n-k+1}\frac{n!}{k_j!}\left(\frac{(-1)^{j+1}(j-1)!\alpha^j\beta^{j-1}}{j!}\right)^{k_j}=n!(-\alpha\beta)^n,
	\end{equation*}
	which is the required identity.
\end{proof}
Substituting $(\alpha,\beta)\in\{(-1,1),(1,1),(-1,-1),(1,-1)\}$ in (\ref{2.4kk}), we obtain the following results, respectively.
\begin{corollary}\label{ct2kk}
	For any positive integer $n$, the following identities hold:
	\begin{eqnarray}\label{cc2.4kk}
	\sum_{k=1}^{n‎}\left(-1\right)^kB_{n,k}(-0!,-1!,\ldots,-(n-k)!)&=&n!,\nonumber\\
	\sum_{k=1}^{n‎}\left(-1\right)^kB_{n,k}(0!,-1!,\ldots,(-1)^{n-k}(n-k)!)&=&n!(-1)^n,\nonumber\\
	\sum_{k=1}^{n‎}B_{n,k}(-0!,1!,\ldots,(-1)^{n-k+1}(n-k)!)&=&n!(-1)^n,\label{bg2.4kk}\\
	\sum_{k=1}^{n‎}B_{n,k}(0!,1!,\ldots,(n-k)!)&=&n!.\label{gb2.4kk}
	\end{eqnarray}
\end{corollary}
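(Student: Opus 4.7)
The plan is to derive each of the four identities by direct substitution of the specified pair $(\alpha,\beta)$ into Theorem \ref{t2kk} and simplifying the signs. The key observation is that the generic $j$-th argument of $B_{n,k}$ in \eqref{2.4kk} has the closed form $(-1)^{j-1}(j-1)!\alpha^j\beta^{j-1}$, the outer pre-factor inside the $k$-sum is $(-\beta)^k$, and the right-hand side is $n!(-\alpha\beta)^n$. Once I write the $j$-th argument as a single power of $-1$ (combining the three sign contributions), and likewise simplify $(-\beta)^k$ and $(-\alpha\beta)^n$ for each specific choice of $(\alpha,\beta)$, the four listed identities fall out with no further work.

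Concretely, I would handle the four cases in order. For $(\alpha,\beta)=(-1,1)$ the $j$-th entry becomes $(-1)^{j-1}(-1)^j(j-1)! = -(j-1)!$, independent of $j$, so the arguments are $-0!,-1!,\ldots,-(n-k)!$; the factor $(-\beta)^k$ reduces to $(-1)^k$; and $n!(-\alpha\beta)^n = n!$. For $(\alpha,\beta)=(1,1)$ the $j$-th entry is $(-1)^{j-1}(j-1)!$, giving the signs $0!,-1!,2!,\ldots,(-1)^{n-k}(n-k)!$; again $(-\beta)^k=(-1)^k$; and $n!(-\alpha\beta)^n = n!(-1)^n$. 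For $(\alpha,\beta)=(-1,-1)$ the combined sign is $(-1)^{j-1}(-1)^j(-1)^{j-1} = (-1)^j$, producing arguments $-0!,1!,-2!,\ldots,(-1)^{n-k+1}(n-k)!$; here $(-\beta)^k = 1^k = 1$ so no external sign remains in the sum; and $n!(-\alpha\beta)^n = n!(-1)^n$, matching \eqref{bg2.4kk}. Finally for $(\alpha,\beta)=(1,-1)$ the combined sign is $(-1)^{j-1}(-1)^{j-1} = 1$, so the arguments are $0!,1!,\ldots,(n-k)!$; $(-\beta)^k = 1$; and $n!(-\alpha\beta)^n = n!$, matching \eqref{gb2.4kk}.

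There is no real obstacle here; the only thing to be careful about is parity bookkeeping for the three independent sources of signs, namely $(-1)^{j-1}$, $\alpha^j$ and $\beta^{j-1}$. In each of the four sub-cases these three factors conspire to give a clean pattern, and the corollary follows immediately from Theorem \ref{t2kk}.
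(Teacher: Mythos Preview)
Your proposal is correct and follows exactly the paper's approach: the paper simply states that the four identities follow by substituting $(\alpha,\beta)\in\{(-1,1),(1,1),(-1,-1),(1,-1)\}$ into \eqref{2.4kk}, and your careful parity bookkeeping spells out precisely that substitution.
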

\noindent {\small{\sc Remark 2.}}
The identities given by (\ref{bg2.4kk}) and (\ref{gb2.4kk}) above can be expressed in terms of the $n$-th complete exponential Bell polynomials as follows:
\begin{eqnarray*}
	B_{n}(-0!,1!,\ldots,(-1)^{n}(n-1)!)&=&n!(-1)^n,\\
	B_{n}(0!,1!,\ldots,(n-1)!)&=&n!.
\end{eqnarray*}
Note that (\ref{gb2.4kk}) is a known result, \textit{i.e.} $B_{n,k}(0!,1!,\ldots,(n-k)!)=s(n,k),$ where $s(n,k)$'s are Stirling numbers of the first kind.

The corresponding identities for the partial ordinary Bell polynomials easily follow from (\ref{ew1.10}). 
\begin{corollary}\label{tokk}
	Let $\alpha,\beta$ be any real numbers and $n$ be any positive integer. Then
	\begin{equation}\label{mdh}
	\sum_{k=1}^{n‎}\frac{\left(-\beta\right)^k}{k!}\hat{B}_{n,k}\left(\alpha,-\frac{\alpha^2\beta}{2},\ldots,\left(-1\right)^{n-k}\frac{\alpha^{n-k+1}\beta^{n-k}}{n-k+1}\right)=(-\alpha\beta)^n.
	\end{equation}
\end{corollary}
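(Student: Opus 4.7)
The plan is to deduce this corollary directly from Theorem \ref{t2kk} by converting between the two families of Bell polynomials via the identity (\ref{ew1.10}). The strategy involves only a single substitution: matching the argument lists in the identity for $\hat{B}_{n,k}$ with those of $B_{n,k}$ appearing in (\ref{2.4kk}), and then absorbing the combinatorial factor $k!/n!$ from (\ref{ew1.10}) into the $(-\beta)^k/k!$ appearing on the left-hand side of (\ref{mdh}).

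Concretely, I would set $u_j = (-1)^{j-1}\alpha^j\beta^{j-1}/j$ for $j = 1,\ldots,n-k+1$, so that these are precisely the arguments of $\hat{B}_{n,k}$ in (\ref{mdh}). A short calculation gives
\begin{equation*}
j!\,u_j = (-1)^{j-1}(j-1)!\,\alpha^j\beta^{j-1},
\end{equation*}
which matches the $j$-th argument of the $B_{n,k}$ appearing in Theorem \ref{t2kk}. Applying (\ref{ew1.10}) therefore yields
\begin{equation*}
\hat{B}_{n,k}\left(\alpha,-\tfrac{\alpha^2\beta}{2},\ldots,(-1)^{n-k}\tfrac{\alpha^{n-k+1}\beta^{n-k}}{n-k+1}\right) = \frac{k!}{n!}\,B_{n,k}\!\left(0!\alpha,-1!\alpha^2\beta,\ldots,(-1)^{n-k}(n-k)!\alpha^{n-k+1}\beta^{n-k}\right).
\end{equation*}

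To finish, I would multiply both sides by $(-\beta)^k/k!$ and sum over $k=1,\ldots,n$; the $k!$ factors cancel, and Theorem \ref{t2kk} evaluates the resulting sum to $n!(-\alpha\beta)^n/n! = (-\alpha\beta)^n$, which is the right-hand side of (\ref{mdh}). The argument is essentially a bookkeeping step and presents no real obstacle; the only point requiring care is verifying that the exponents of $\alpha$, $\beta$ and the signs line up correctly when $j!$ is absorbed into $u_j$, which is the computation displayed above.
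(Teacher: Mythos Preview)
Your proposal is correct and follows exactly the route the paper intends: the paper states just before this corollary that ``the corresponding identities for the partial ordinary Bell polynomials easily follow from (\ref{ew1.10}),'' and your argument spells out precisely that substitution and cancellation. There is nothing to add.
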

Substituting $(\alpha,\beta)\in\{(-1,1),(1,1),(-1,-1),(1,-1)\}$ in (\ref{mdh}), we obtain the following results, respectively.
\begin{corollary}\label{cot2kk}
	For any positive integer $n$, the following identities hold:
	\begin{eqnarray*}\label{coc2.4kk}
		\sum_{k=1}^{n‎}\frac{\left(-1\right)^k}{k!}\hat{B}_{n,k}\left(-1,-\frac{1}{2},\ldots,-\frac{1}{n-k+1}\right)&=&1,\\
		\sum_{k=1}^{n‎}\frac{\left(-1\right)^k}{k!}\hat{B}_{n,k}\left(1,-\frac{1}{2},\ldots,\frac{(-1)^{n-k}}{n-k+1}\right)&=&(-1)^n,\\
		\sum_{k=1}^{n‎}\frac{1}{k!}\hat{B}_{n,k}\left(-1,\frac{1}{2},\ldots,\frac{(-1)^{n-k+1}}{n-k+1}\right)&=&(-1)^n,\\
		\sum_{k=1}^{n‎}\frac{1}{k!}\hat{B}_{n,k}\left(1,\frac{1}{2},\ldots,\frac{1}{n-k+1}\right)&=&1.
	\end{eqnarray*}
\end{corollary}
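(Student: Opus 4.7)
The plan is to derive each of the four identities as a direct specialization of the general formula (\ref{mdh}) in Corollary \ref{tokk}, by substituting in turn the four parameter choices $(\alpha,\beta) \in \{(-1,1),(1,1),(-1,-1),(1,-1)\}$. No new analytic input is needed: all the work has been done by the ADM argument underlying Theorem \ref{t2kk} and the conversion to partial ordinary Bell polynomials via (\ref{ew1.10}). So the proof is essentially bookkeeping.

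First I would record the general shape of the $j$-th argument inside $\hat{B}_{n,k}$ appearing in (\ref{mdh}), namely
\[
a_j(\alpha,\beta) \;=\; (-1)^{j-1}\,\frac{\alpha^j\beta^{j-1}}{j}, \qquad j=1,2,\ldots,n-k+1,
\]
together with the scalar coefficient $(-\beta)^k/k!$ and the right-hand side $(-\alpha\beta)^n$. Then I would work through the four choices and simplify:
\textbf{(i)} $(\alpha,\beta)=(-1,1)$ gives $(-\beta)^k=(-1)^k$, $a_j=-1/j$, and $(-\alpha\beta)^n=1$;
\textbf{(ii)} $(\alpha,\beta)=(1,1)$ gives $(-\beta)^k=(-1)^k$, $a_j=(-1)^{j-1}/j$, and $(-\alpha\beta)^n=(-1)^n$;
\textbf{(iii)} $(\alpha,\beta)=(-1,-1)$ gives $(-\beta)^k=1$, $a_j=(-1)^{j-1}(-1)^j(-1)^{j-1}/j=(-1)^{j+1}/j$ (equivalently $(-1)^j/j$ after reducing $(-1)^{3j-2}$), and $(-\alpha\beta)^n=(-1)^n$;
\textbf{(iv)} $(\alpha,\beta)=(1,-1)$ gives $(-\beta)^k=1$, $a_j=1/j$, and $(-\alpha\beta)^n=1$.
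In each case the specialized arguments match the lists $(-1,-\tfrac12,\ldots,-\tfrac{1}{n-k+1})$, $(1,-\tfrac12,\ldots,\tfrac{(-1)^{n-k}}{n-k+1})$, $(-1,\tfrac12,\ldots,\tfrac{(-1)^{n-k+1}}{n-k+1})$, $(1,\tfrac12,\ldots,\tfrac{1}{n-k+1})$ stated in the corollary, and the right-hand side reduces to the corresponding value in $\{1,(-1)^n,(-1)^n,1\}$.

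The only non-mechanical step is the sign bookkeeping in case (iii), where three independent factors of $(-1)^{j-1}$ and $(-1)^{j}$ collapse; I would verify this by checking the endpoints $j=1$ (giving $-1$) and $j=n-k+1$ (giving $(-1)^{n-k+1}/(n-k+1)$), which pin down the correct sign pattern. Beyond this, the corollary is an immediate restatement of (\ref{mdh}) and requires no further argument.
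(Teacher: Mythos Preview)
Your proposal is correct and follows exactly the paper's approach: the paper simply states that the four identities are obtained by substituting $(\alpha,\beta)\in\{(-1,1),(1,1),(-1,-1),(1,-1)\}$ into (\ref{mdh}), with no further detail. One minor slip in your case~(iii): the intermediate expression $(-1)^{j+1}/j$ should be $(-1)^j/j$ (since $3j-2\equiv j\pmod 2$), but your parenthetical reduction and endpoint check already land on the correct sign pattern, so the argument stands.
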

Next we obtain an identity of the partial exponential Bell polynomials in terms of the falling factorials.
\begin{theorem}\label{t2}
	Let $\alpha$ be any non zero real number and $n$ be any positive integer. Then
	\begin{equation*}\label{2.4}
	\sum_{k=1}^{n‎}\left(1-\frac{1}{\alpha}\right)_kB_{n,k}((\alpha)_1,(\alpha)_2,\ldots,(\alpha)_{n-k+1})=(\alpha-1)_n,
	\end{equation*}
	where $(\alpha)_k=\alpha(\alpha-1)(\alpha-2)\ldots(\alpha-k+1)$ denotes the falling factorial.
\end{theorem}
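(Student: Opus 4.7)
The plan is to mirror the strategy of Theorem~\ref{t2kk}: choose an initial value problem, solve it both in closed form and by ADM, and use uniqueness of the ADM components to match coefficients. The shape of the identity dictates the choice of ODE: the factor $(1-1/\alpha)_k$ matches $N^{(k)}(1)$ for the power nonlinearity $N(u) = u^{1-1/\alpha}$, while the arguments $(\alpha)_j$ of $B_{n,k}$ and the right-hand side $(\alpha-1)_n$ point toward a solution built out of generalized binomial coefficients.

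Concretely, I would consider
\begin{equation*}
\frac{du}{dx} = u^{1-1/\alpha}, \qquad u(0)=1, \qquad |x| < |\alpha|,
\end{equation*}
whose separation-of-variables solution is $u(x) = (1+x/\alpha)^\alpha$. The generalized binomial series identifies the ADM components uniquely (as in Theorem~\ref{t2kk}) as $u_0 = 1$ and $u_j(x) = \frac{(\alpha)_j}{j!\,\alpha^j}\,x^j$ for $j\geq 1$. The integral form $u(x) = 1 + \int_0^x u(t)^{1-1/\alpha}\,\mathrm{d}t$ supplies the ADM recursion
\begin{equation*}
u_{n+1}(x) = \int_0^x A_n(u_0(t),\ldots,u_n(t))\,\mathrm{d}t,
\end{equation*}
with $A_n$ the Adomian polynomials of $N(u)=u^{1-1/\alpha}$, so that $N^{(k)}(u_0) = (1-1/\alpha)_k$.

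The main computation uses Rach's formula~(\ref{1.7}) and the identification $C(k,n) = \frac{1}{n!}B_{n,k}(1!u_1,\ldots,(n-k+1)!u_{n-k+1})$ from~(\ref{2.2nb}). Since $j!\,u_j(t) = (\alpha)_j\,(t/\alpha)^j$, the homogeneity
\begin{equation*}
B_{n,k}(c\,a_1,\,c^2 a_2,\,\ldots,\,c^{n-k+1}a_{n-k+1}) = c^n\,B_{n,k}(a_1,\ldots,a_{n-k+1}),
\end{equation*}
which is immediate from~(\ref{1.10}) via $\sum j k_j = n$, allows one to pull $(t/\alpha)^n$ out of the Bell polynomial. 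Integrating $\int_0^x t^n\,\mathrm{d}t$ and equating the resulting $u_{n+1}(x)$ with $\frac{(\alpha)_{n+1}}{(n+1)!\,\alpha^{n+1}}\,x^{n+1}$ from the binomial expansion, then cancelling common factors using $(\alpha)_{n+1} = \alpha\,(\alpha-1)_n$, produces the claimed identity.

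I do not expect a serious obstacle; the calculation is a parallel of the exponential case in Theorem~\ref{t2kk}. The one point worth noting is that the identity, initially established on $|x|<|\alpha|$ for $\alpha$ where the binomial series converges, extends to all real $\alpha \neq 0$: after multiplication by $\alpha^n$ both sides are polynomials in $\alpha$, so equality on any interval forces equality throughout.
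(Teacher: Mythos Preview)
Your proposal is correct and follows essentially the same route as the paper: the paper uses the ODE $u'=\alpha\,u^{1-1/\alpha}$, $u(0)=1$, whose solution $(1+x)^{\alpha}$ gives $u_j(x)=\frac{(\alpha)_j}{j!}x^j$ directly, while your version $u'=u^{1-1/\alpha}$ differs only by the rescaling $x\mapsto x/\alpha$ and an extra appeal to the homogeneity of $B_{n,k}$ to strip off the factor $(t/\alpha)^n$. Your final remark on extending the identity to all $\alpha\neq 0$ by polynomial continuation is a small addition the paper does not make explicit.
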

\begin{proof}
	Consider the following ordinary differential equation
	\begin{equation}\label{2.5}
	\frac{du}{dx}=\alpha u^{1-1/\alpha},\ \ \ \ u(0)=1,\ \ \ |x|<1.
	\end{equation}
	Equivalently,
	\begin{equation}\label{2.6}
	u(x)=u(0)+\alpha \int_0^{x}u^{1-1/\alpha}(t)\,\mathrm{d}t.
	\end{equation}
	In (\ref{2.6}), there is no linear term but the nonlinear term is $N(u)=u^{1-1/\alpha}$. Substituting $u=\sum_{n=0}^{\infty}u_n$ and $N(u)=\sum_{n=0}^{\infty}A_n$ in the above equation and applying ADM, we get
	\begin{equation*}\label{2.7}
	\sum_{n=0}^{\infty}u_n(x)=u(0)+\alpha\sum_{n=0}^{\infty} \int_0^{x}A_n\left(u_0(t),u_1(t),\ldots,u_n(t)\right)\,\mathrm{d}t.
	\end{equation*}
	Therefore, for all $n\in\mathbb{N}$,
	\begin{equation}\label{2.8}
	u_{n+1}(x)=\alpha \int_0^{x}A_{n}\left(u_0(t),u_1(t),\ldots,u_n(t)\right)\,\mathrm{d}t.
	\end{equation}
	On comparing $u_0=u(0)=1$ and hence $u_1,u_2,u_3,u_4$ are recursively obtained as
	\begin{eqnarray*}
		A_0\left(u_0(x)\right)&=&u_0^{1-1/\alpha}=1,\\
		u_1(x)&=&(\alpha)_1 x,\\
		A_1\left(u_0(x),u_1(x)\right)&=&\left(1-\frac{1}{\alpha}\right)u_0^{-1/\alpha}u_1=\frac{(\alpha)_2}{\alpha} x,\\
		u_2(x)&=&\frac{(\alpha)_2}{2!} x^2,\\
		A_2\left(u_0(x),u_1(x),u_2(x)\right)&=&\frac{1}{2!}\left(1-\frac{1}{\alpha}\right)\left(-\frac{1}{\alpha}u_0^{-1/\alpha-1}u_1^2+2u_0^{-1/\alpha}u_2\right)\\
		&=&\frac{(\alpha)_3}{2!\alpha} x^2,\\
		u_3(x)&=&\frac{(\alpha)_3}{3!} x^3,\\
		A_3\left(u_0(x),u_1(x),u_2(x),u_3(x)\right)&=&\frac{1}{3!}\left(1-\frac{1}{\alpha}\right)\left(\left(\frac{1}{\alpha}+\frac{1}{\alpha^2}\right)u_0^{-1/\alpha-2}u_1^3\right.\\&&\left.-\frac{4}{\alpha}u_0^{-1/\alpha-1}u_1u_2+6u_0^{-1/\alpha}u_3-\frac{2}{\alpha}u_0^{-1/\alpha}u_1u_2\right)\\
		&=&\frac{(\alpha)_4}{3!\alpha} x^3,\\
		u_4(x)&=&\frac{(\alpha)_4}{4!} x^4,
	\end{eqnarray*}
	and so on.
	While using the separation of variables method, it is easy to see that $u(x)=(1+x)^{\alpha}$ is the solution of (\ref{2.5}). Since $|x|<1$, expand $(1+x)^{\alpha}$ by using generalized binomial theorem
	\begin{equation}\label{2.9}
	u(x)=1+\sum_{n=1}^{\infty}\frac{(\alpha)_n}{n!}x^n=u(0)+\sum_{n=1}^{\infty}u_n(x).
	\end{equation}
	By uniqueness of the solution of the differential equation (\ref{2.5}), the series solution obtained by ADM is consistent with (\ref{2.9}). Therefore, $n$-th component of $u(x)$ is
	\begin{equation}\label{2.10}
	u_{n}(x)=\frac{(\alpha)_{n}}{n!} x^{n},\ \ n\geq0.
	\end{equation}
	Now from (\ref{2.8}) and (\ref{2.10}), we have
	\begin{eqnarray*}\label{2.11}
		\frac{(\alpha)_{n+1}}{(n+1)!} x^{n+1}&=&\alpha \int_0^{x}A_{n}\left(u_0(t),u_1(t),\ldots,u_n(t)\right)\,\mathrm{d}t,\\
		&=&\alpha \int_0^{x}\sum_{k=1}^{n}N^{(k)}(u_0)\underset{\Theta^k_n}{\sum}\prod_{j=1}^{n}\frac{u_j^{k_j}(t)}{k_j!}\,\mathrm{d}t,\ \ \ \ \ \ \ \ \ \ \mathrm{(using\ (\ref{1.7})\ and\ (\ref{1.8}))}\\
		&=&\alpha \int_0^{x}t^n\,\mathrm{d}t\sum_{k=1}^{n}\left(1-\frac{1}{\alpha}\right)_k\underset{\Theta^k_n}{\sum}\prod_{j=1}^{n}\frac{1}{k_j!}\left(\frac{(\alpha)_{j}}{j!}\right)^{k_j},\ \ \ \mathrm{(using\ (\ref{2.10}))}\\
		&=&\frac{\alpha}{n+1}x^{n+1}\sum_{k=1}^{n}\left(1-\frac{1}{\alpha}\right)_k\underset{\Lambda^k_n}{\sum}\prod_{j=1}^{n-k+1}\frac{1}{k_j!}\left(\frac{(\alpha)_{j}}{j!}\right)^{k_j},
	\end{eqnarray*}
	where the last step follows on using Lemma \ref{l4}. On rearranging the terms we get
	\begin{equation*}
	\sum_{k=1}^{n}\left(1-\frac{1}{\alpha}\right)_k\underset{\Lambda^k_n}{\sum}n!\prod_{j=1}^{n-k+1}\frac{1}{k_j!}\left(\frac{(\alpha)_{j}}{j!}\right)^{k_j}=(\alpha-1)_n,
	\end{equation*}
	which is the required identity.
\end{proof}
\begin{corollary}\label{tcm2}
	Let $m,n$ be any positive integer such that $m>n$. Then
	\begin{equation*}\label{bn2.4}
	\sum_{k=1}^{n‎}\frac{1}{k!}\left(1-\frac{1}{m}\right)_k\hat{B}_{n,k}\left({{m}\choose{1}},{{m}\choose{2}},\ldots,{{m}\choose{n-k+1}}\right)={{m-1}\choose{n}},
	\end{equation*}
	where ${{m}\choose{n}}=\frac{m!}{n!(m-n)!}$.
\end{corollary}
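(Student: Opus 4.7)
The plan is to recognize Corollary \ref{tcm2} as a direct specialization and rewriting of Theorem \ref{t2}, so no new analytic content is needed; the work is just a translation between the partial exponential Bell polynomials and the partial ordinary Bell polynomials via identity (\ref{ew1.10}).

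First, I would specialize Theorem \ref{t2} to the integer value $\alpha = m$. Since $m$ is a positive integer with $m > n$, the falling factorials $(m)_j = m(m-1)\cdots(m-j+1)$ are unambiguous positive integers for every $1 \leq j \leq n-k+1 \leq n$, and the quantity on the right reduces to $(m-1)_n = n!\binom{m-1}{n}$. Thus Theorem \ref{t2} yields
\begin{equation*}
\sum_{k=1}^{n}\left(1-\tfrac{1}{m}\right)_k B_{n,k}\bigl((m)_1,(m)_2,\ldots,(m)_{n-k+1}\bigr) = n!\binom{m-1}{n}.
\end{equation*}

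Next, I would use the elementary factorization $(m)_j = j!\,\binom{m}{j}$, so that the $j$-th argument of $B_{n,k}$ has exactly the form $j!\,u_j$ with $u_j = \binom{m}{j}$. This is precisely the shape required by (\ref{ew1.10}), which then gives
\begin{equation*}
B_{n,k}\bigl((m)_1,(m)_2,\ldots,(m)_{n-k+1}\bigr) = \frac{n!}{k!}\,\hat{B}_{n,k}\!\left(\binom{m}{1},\binom{m}{2},\ldots,\binom{m}{n-k+1}\right).
\end{equation*}

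Finally, substituting this into the specialized identity and dividing both sides by $n!$ produces exactly
\begin{equation*}
\sum_{k=1}^{n}\frac{1}{k!}\left(1-\tfrac{1}{m}\right)_k \hat{B}_{n,k}\!\left(\binom{m}{1},\binom{m}{2},\ldots,\binom{m}{n-k+1}\right) = \binom{m-1}{n},
\end{equation*}
as claimed. There is no real obstacle here: the only thing to be mildly careful about is recording that the hypothesis $m > n$ is what guarantees $\binom{m}{j} \neq 0$ throughout the range of arguments, so the corollary reads cleanly; everything else is a mechanical rewriting.
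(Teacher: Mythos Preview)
Your argument is correct and is exactly the intended derivation: the paper places this corollary immediately after Theorem~\ref{t2} and, as with Corollary~\ref{tokk}, the passage from $B_{n,k}$ to $\hat{B}_{n,k}$ is meant to be carried out via (\ref{ew1.10}) together with $(m)_j=j!\binom{m}{j}$ and $(m-1)_n=n!\binom{m-1}{n}$, just as you do.
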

Finally, we give two recursive algorithms for the complete exponential Bell polynomials.
\begin{theorem}
	Let $n$ be any positive integer. Then
	\begin{equation*}
	B_n(1!u_1,\ldots,n!u_n)=\sum_{k=0}^{n-1}(k+1)(n-1)_ku_{k+1}B_{n-k-1}(1!u_1,\ldots,(n-k-1)!u_{n-k-1}),
	\end{equation*}
	and
	\begin{align*}
	B_n(1!u_1,\ldots,n!u_n)&=u_{1}B_{n-1}(1!u_1,\ldots,(n-1)!u_{n-1})\\
	&\ \ \ +\sum_{k=1}^{n-1}(k+1)u_{k+1}\frac{\partial}{\partial u_k}B_{n-1}(1!u_1,\ldots,(n-1)!u_{n-1}).
	\end{align*}
\end{theorem}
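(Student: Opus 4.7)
The plan is to derive both identities by summing the two recurrences of Corollary \ref{fgh37} over $k=1,\ldots,n$ and recognising $\sum_{k=1}^{m}B_{m,k}=B_m$ on the right side, using the standard conventions $B_{m,0}=0$ for $m\geq 1$, $B_{0,0}=1$, and $B_{0}=1$ (empty product).

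For the first identity, I first check that the recurrence $B_{n,k}=\sum_{j=0}^{n-k}(j+1)(n-1)_{j}u_{j+1}B_{n-j-1,k-1}$, stated in Corollary \ref{fgh37} for $2\leq k\leq n$, also holds for $k=1$: under the conventions above, only the $j=n-1$ term survives and it equals $n\cdot(n-1)!\,u_{n}=n!\,u_{n}=B_{n,1}$, as required. I then sum over $k=1,\ldots,n$ and interchange the two summations to obtain
\begin{equation*}
B_n=\sum_{k=1}^{n}\sum_{j=0}^{n-k}(j+1)(n-1)_{j}u_{j+1}B_{n-j-1,k-1}=\sum_{j=0}^{n-1}(j+1)(n-1)_{j}u_{j+1}\sum_{k'=0}^{n-j-1}B_{n-j-1,k'}.
\end{equation*}
The inner sum equals $B_{n-j-1}$ in every case: for $j<n-1$ the $k'=0$ term vanishes and the remaining terms assemble into $B_{n-j-1}$, while for $j=n-1$ the inner sum reduces to $B_{0,0}=1=B_{0}$. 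Renaming $j\mapsto k$ delivers the first identity.

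For the second identity I verify analogously that the recurrence $B_{n,k}=u_{1}B_{n-1,k-1}+\sum_{j=1}^{n-k}(j+1)u_{j+1}\frac{\partial}{\partial u_{j}}B_{n-1,k}$ of Corollary \ref{fgh37} remains valid at $k=1$ and $k=n$, and I sum it over $k=1,\ldots,n$. The $u_1$-piece gives $u_{1}\sum_{k=1}^{n}B_{n-1,k-1}=u_{1}B_{n-1}$. After swapping the order of summation, the remaining piece becomes
\begin{equation*}
\sum_{j=1}^{n-1}(j+1)u_{j+1}\frac{\partial}{\partial u_{j}}\sum_{k=1}^{n-j}B_{n-1,k}(1!u_{1},\ldots,(n-k)!u_{n-k}).
\end{equation*}
The key observation is that $B_{n-1,k}$ depends only on $u_{1},\ldots,u_{n-k}$, so $\frac{\partial}{\partial u_{j}}B_{n-1,k}=0$ whenever $k>n-j$; hence the inner sum may be extended up to $k=n-1$ without altering anything, and it becomes $\frac{\partial}{\partial u_{j}}B_{n-1}$. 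Combining the two contributions yields the second identity.

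The interchange of summations is routine; the main subtlety that I must handle carefully is the boundary verification of the recurrences of Corollary \ref{fgh37} at $k=1$ and $k=n$ under the stated conventions, together with the vanishing-derivative argument that allows the inner sum in the second identity to be extended so that the full $B_{n-1}$ emerges on the right-hand side.
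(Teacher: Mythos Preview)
Your argument is correct. You sum the two recurrences of Corollary~\ref{fgh37} over $k$, carefully verifying the boundary cases $k=1$ and $k=n$ with the conventions $B_{0,0}=1$, $B_{m,0}=0$ for $m\ge1$, and use the fact that $B_{n-1,k}$ does not involve $u_j$ once $k>n-j$ to extend the inner sum and recover $B_{n-1}$. All of this checks out.

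The paper, however, takes a different and somewhat shorter route. It does not sum the $B_{n,k}$-recursions at all; instead it invokes two recurrence formulae for the Adomian polynomials themselves (Corollaries~1 and~2 of Duan (2011)),
\[
A_n=\frac{1}{n}\sum_{k=0}^{n-1}(k+1)u_{k+1}\frac{\partial}{\partial u_0}A_{n-k-1},\qquad
A_n=\frac{1}{n}\sum_{k=0}^{n-1}(k+1)u_{k+1}\frac{\partial}{\partial u_k}A_{n-1},
\]
and then specialises to $N(u)=e^{u}$, for which Theorem~\ref{t1} gives $A_n=\dfrac{e^{u_0}}{n!}B_n(1!u_1,\ldots,n!u_n)$. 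Substituting this closed form into the two Duan recursions and cancelling $e^{u_0}$ yields the two identities immediately, with no boundary-case analysis and no interchange of sums. Your approach has the merit of being self-contained within the Bell-polynomial identities already established in the paper (Corollary~\ref{fgh37}), whereas the paper's proof highlights that the complete-Bell recursions are nothing more than the Adomian recursions of Duan (2011) viewed through the exponential nonlinearity.
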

\begin{proof}
	From Corollary 1 and 2 of {\small{\sc Duan}} (2011), we have the following recursive algorithms for  Adomian polynomials:
	\begin{eqnarray*}
		A_n(u_1,\ldots,u_n)&=&\frac{1}{n}\sum_{k=0}^{n-1}(k+1)u_{k+1}\frac{\partial}{\partial u_0}A_{n-k-1}(u_1,\ldots,u_{n-k-1}),\\
		A_n(u_1,\ldots,u_n)&=&\frac{1}{n}\sum_{k=0}^{n-1}(k+1)u_{k+1}\frac{\partial}{\partial u_k}A_{n-1}(u_1,\ldots,u_{n-1}),\ \ \ n\geq 1.
	\end{eqnarray*}
	The proof follows by choosing the nonlinear term $N(u)=e^u$ in Theorem \ref{t1} and using (\ref{2.1we}) in the above expressions for Adomian polynomials.
\end{proof}
Next corollary gives the corresponding results for the complete ordinary Bell polynomials.
\begin{corollary}
	For a positive integer $n$, the following holds:
	\begin{equation*}
	\hat{B}_n(u_1,u_2,\ldots,u_n)=\frac{1}{n}\sum_{k=0}^{n-1}(k+1)u_{k+1}\hat{B}_{n-k-1}(u_1,u_2,\ldots,u_{n-k-1}),
	\end{equation*}
	and
	\begin{align*}
	\hat{B}_n(u_1,u_2,\ldots,u_n)&=\frac{u_{1}}{n}\hat{B}_{n-1}(u_1,u_2,\ldots,u_{n-1})\\
	&\ \ \ +\frac{1}{n}\sum_{k=1}^{n-1}(k+1)u_{k+1}\frac{\partial}{\partial u_k}B_{n-1}(u_1,u_2,\ldots,u_{n-1}),
	\end{align*}
	where
	\begin{equation*}
	\hat{B}_n(u_1,u_2,\ldots,u_n)=\sum_{k=1}^{n}\frac{1}{k!}\hat{B}_{n,k}(u_1,u_2,\ldots,u_{n-k+1}).
	\end{equation*}
\end{corollary}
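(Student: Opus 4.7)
The plan is to reduce the corollary directly to the theorem just proved by invoking the identity (\ref{ew1.10}) that already links the partial ordinary Bell polynomials to the partial exponential Bell polynomials. First I would sum (\ref{ew1.10}) over $k=1,\ldots,n$ after dividing by $k!$; since $\hat{B}_n(u_1,\ldots,u_n)=\sum_{k=1}^{n}\frac{1}{k!}\hat{B}_{n,k}(u_1,\ldots,u_{n-k+1})$ and $B_n(v_1,\ldots,v_n)=\sum_{k=1}^n B_{n,k}(v_1,\ldots,v_{n-k+1})$, this yields the bridging identity
\begin{equation*}
\hat{B}_n(u_1,\ldots,u_n)=\frac{1}{n!}B_n(1!u_1,2!u_2,\ldots,n!u_n),
\end{equation*}
which is the only tool I really need for both formulas.

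Next I would substitute this bridging identity into the first recursive relation of the preceding theorem. Writing $B_{n-k-1}(1!u_1,\ldots,(n-k-1)!u_{n-k-1})=(n-k-1)!\,\hat{B}_{n-k-1}(u_1,\ldots,u_{n-k-1})$ and using the elementary identity $(n-1)_k/n!=1/\bigl(n\,(n-k-1)!\bigr)$, all factorials collapse and leave precisely the stated form $\frac{1}{n}\sum_{k=0}^{n-1}(k+1)u_{k+1}\hat{B}_{n-k-1}$. So this half is pure algebra once the bridging identity is in place.

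For the second relation the only subtlety is the partial derivative. Setting $g(u_1,\ldots,u_{n-1}):=B_{n-1}(1!u_1,\ldots,(n-1)!u_{n-1})=(n-1)!\,\hat{B}_{n-1}(u_1,\ldots,u_{n-1})$, the chain rule gives $\partial_{u_k}g=(n-1)!\,\partial_{u_k}\hat{B}_{n-1}$, so the scaling factors $j!$ inside the arguments of $B_{n-1}$ do not leak out when the derivative is taken with respect to the pre-scaled variables $u_k$ (this is the interpretation consistent with the theorem above). Dividing the theorem's second identity by $n!$ and applying this observation to each term then produces the claimed formula, with the understanding that the $B_{n-1}$ appearing in the derivative term in the statement should be read as $\hat{B}_{n-1}$ (an evident typographical mismatch between the two levels of Bell polynomials).

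The step I expect to require the most care is exactly this handling of the derivative term: making clear that $\partial/\partial u_k$ acts after the $k!$ rescaling, so that the conversion factor between $B_{n-1}$ and $\hat{B}_{n-1}$ propagates through cleanly and matches the $1/n$ on the right-hand side. Once that bookkeeping is done, no further combinatorial work is needed, and the corollary follows as a one-line rewriting of the theorem.
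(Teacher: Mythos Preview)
Your approach is correct and matches the paper's intention: the corollary is stated without proof, immediately after the theorem, as ``the corresponding results for the complete ordinary Bell polynomials,'' so it is meant to follow from the theorem via the relation (\ref{ew1.10}), exactly as you propose through the bridging identity $\hat{B}_n(u_1,\ldots,u_n)=\frac{1}{n!}B_n(1!u_1,\ldots,n!u_n)$. Your handling of the derivative term and your observation that the $B_{n-1}$ in the statement is a typo for $\hat{B}_{n-1}$ are both on point.
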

\section*{Acknowledgements}
The authors wish to thank Dr. Randolph Rach for providing insightful comments on initial version of the manuscript and especially for pointing reference {\small{\sc Abbaoui}} {\it et al.} (1995). The research of the first author was supported by a UGC fellowship no. F.2-2/98(SA-I), Govt. of India.

\end{document}